\documentclass[11pt]{amsart}
\usepackage{amssymb,amsmath,amsfonts,amscd,euscript}
\usepackage[all]{xy}
\usepackage{color}
\usepackage{tikz}
\usepackage{tikz-cd}

\usepackage[backref=page]{hyperref}
\usepackage{todonotes}

\usepackage{amsthm,enumitem,bm,pict2e,xcolor}
\usepackage{indentfirst}

\newcommand{\nc}{\newcommand}

\numberwithin{equation}{section}
\newtheorem{thm}{Theorem}[section]
\newtheorem{prop}[thm]{Proposition}
\newtheorem{lem}[thm]{Lemma}
\newtheorem{cor}[thm]{Corollary}
\newtheorem{rem}[thm]{Remark}

\newtheorem{example}[thm]{Example}


\newtheorem{theoremA}{Theorem}

\nc{\gl}{\mathfrak{gl}}
\nc{\msl}{\mathfrak{sl}}
\nc{\fh}{\mathfrak{h}}
\nc{\GL}{\mathfrak{GL}}
\nc{\g}{\mathfrak{g}}
\nc{\gh}{\widehat\g}
\nc{\la}{\lambda}
\nc{\al}{\alpha }
\nc{\be}{\beta }
\nc{\ve}{\varepsilon }
\nc{\om}{\omega }

\nc{\ch}{{\mathop {\rm ch}}}
\nc{\Tr}{{\mathop {\rm Tr}\,}}
\nc{\Id}{{\mathop {\rm Id}}}
\nc{\bra}{\langle}
\nc{\ket}{\rangle}
\nc{\bs}{{\bf s}}
\nc{\bbm}{{\bf m}}
\nc{\bp}{{\bf p}}
\nc{\bt}{{\bf t}}
\nc{\pa}{\partial}
\nc{\ld}{\ldots}
\nc{\cd}{\cdots}
\nc{\hk}{\hookrightarrow}
\nc{\T}{\otimes}
\nc{\Gr}{\mathrm{Gr}}
\nc{\ov}{\overline}

\nc{\cO}{\mathcal O}
\nc{\cF}{\mathcal F}
\nc{\cL}{\mathcal L}
\nc{\mgl}{\mathfrak{gl}}
\nc{\U}{\mathrm U}
\nc{\V}{\EuScript V}
\nc{\bH}{\EuScript H}
\nc{\Res}{\mathrm{Res\ }}

\newcommand{\bC}{{\mathbb C}}
\newcommand{\bR}{{\mathbb R}}
\newcommand{\bZ}{{\mathbb Z}}

\newcommand{\bP}{{\mathbb P}}
\newcommand{\bA}{{\mathbb A}}
\newcommand{\bG}{{\mathbb G}}
\newcommand{\fp}{{\mathfrak p}}
\newcommand{\fg}{{\mathfrak g}}
\newcommand{\fb}{{\mathfrak b}}

\newcommand{\G}{{\mathfrak G}}

\newcommand{\fn}{{\mathfrak n}}
\newcommand{\fr}{{\mathfrak r}}

\newcommand{\Fl}{\EuScript{F}}

\newcommand{\Hom}{\mathrm{Hom}}
\newcommand{\pr}{\mathrm{pr}}

\newcommand{\bd}{{\bf d}}

\begin{document}

\title[Birational maps, PBW degenerate flags  and poset polytopes]
{Birational maps,  PBW degenerate flags  and poset polytopes}

\author{Evgeny Feigin}
\address{Evgeny Feigin:\newline
School of Mathematical Sciences, Tel Aviv University, Tel Aviv, 69978, Israel}
\email{evgfeig@gmail.com}

\begin{abstract}
We extend the results on the graph closures of the birational maps between projective
spaces and Grassmannians to the
case of PBW degenerate flag varieties. 
The advantage of the PBW degenerate flags (as opposed to their classical
analogues) is the existence of a large group of symmetries for the graph closures. 
We discuss the combinatorial, algebraic and geometric sides of the picture. In particular, we show that toric 
degenerations of Borovik, Sturmfels and Sverrisd\'ottir are still available
in the general settings. We also derive a description of the graph closures
for flag varieties 
in terms of quiver representations. 
\end{abstract}

\maketitle

\section*{Introduction}
In \cite{BSS,F5} the closures of the graphs of birational maps 
from projective spaces to Grassmannians were studied from combinatorial, algebraic and
geometric sides (see also \cite{FO,FSS}). 
The goal of this paper is to extend the whole picture  to the case 
of arbitrary type $A$ flag varieties. A brief outcome is that such an extension does
exist, but our approach works well not for the classical partial flag varieties,
but for their PBW degenerations \cite{F1,F2}. Let us describe our results in
more detail.

The general geometric construction we deal with is as follows.
Let $X$ be a projective algebraic variety admitting an open cell 
$\imath: \bA^N\to X$. Treating  the affine space $\bA^N$ as an open part 
of the projective space $\bP^N$ one gets a birational extension 
$\imath:\bP^N\to X$. Then one wants to study the closure of the graph 
$\{(x,\imath(x)), x\in\bA^N\}$ inside the product $\bP^N\times X$ (see \cite{KP,BSS,F5} for
special cases of this construction).

In \cite{F5} we studied the case $X=\Gr(d,n)$ -- the Grassmann variety of $d$-dimensional subspaces in an $n$-dimensional vector space. It is natural
to ask if one can generalize the available results to the case of flag 
varieties. Unfortunately, the approach of \cite{F5} does not apply to the case
of classical flag varieties, because the group of symmetries available
in the Grassmann case gets broken. However, if one replaces the classical flag varieties with their PBW degeneration \cite{F1,F2,F4}, then the group 
of symmetries survives (this group is certain degeneration $SL_n^a$ of the
classical special linear group \cite{BNV,BR,F3,KZ-J,PY1}). Hence in this paper we 
deal with the PBW degenerate flag varieties.

Recall that the full $SL_n$ PBW degenerate flag variety $\Fl_n^a$ can be defined in
pure Lie theoretic terms, but also admits an explicit description in terms of
sequences of subspaces of an $n$-dimensional vector space. One can reformulate
this description in a more conceptual way. Namely, let us consider the equioriented  type $A_{n-1}$ quiver $Q$. Then one identifies $\Fl_n^a$ with
a quiver Grassmannian $\Gr_{\dim P}(P\oplus I)$, where $P$ and $I$ are certain
projective and injective $Q$-modules (see \cite{CFR1,CFR2}).

The degenerate flag varieties admit an open cell and we consider the graph
closure $\G(n)\subset \bP^N\times \Fl_n^a$, where $N=n(n-1)/2$ is the dimension
of the flag variety. We note that by definition one gets a projection 
$\varphi:\G(n)\to \Fl_n^a$.
An important observation is that the action of the 
degenerate Lie group $SL_n^a$ extends from $\Fl_n^a$ to $\G(n)$ and $\varphi$ is equivariant 
with respect to this action. This observation allows to prove the following theorem.

\begin{theoremA}
The fiber $\varphi^{-1}(U)$ over a point $U\in \Fl_n^a$ is isomorphic 
to the projectivization
of the space $\Hom_Q(P/N_P,N_I)$, where $U\simeq N_P\oplus N_I$ as a point of the
quiver Grassmannian $\Gr_{\dim P}(P\oplus I)$, $N_P\subset P$, $N_I\subset I$.
\end{theoremA}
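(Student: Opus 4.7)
The strategy is to use the $SL_n^a$-equivariance of $\varphi$ to reduce to canonical orbit representatives and then compute the fiber by analyzing graph-closure limits along straight lines in the big cell. Concretely, the $SL_n^a$-orbits on $\Fl_n^a=\Gr_{\dim P}(P\oplus I)$ are classified by the $Q$-module isomorphism class of $U$; for the equioriented $A_{n-1}$ quiver every such $U$ decomposes up to isomorphism as $N_P\oplus N_I$ with $N_P\subset P$, $N_I\subset I$, and the literal direct sum $N_P\oplus N_I\subset P\oplus I$ serves as a canonical orbit representative. By the equivariance of $\varphi$, fibers over orbit-equivalent points are canonically isomorphic, so it suffices to identify $\varphi^{-1}(N_P\oplus N_I)$ for the canonical representative.

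For this canonical $U$ I would identify $\bA^N\hookrightarrow\Fl_n^a$ with $\Hom_Q(P,I)$ (both of dimension $N=n(n-1)/2$) via the graph parameterization $\phi\mapsto\imath(\phi)=\{(p,\phi(p)):p\in P\}$. For $\phi_0\in\Hom_Q(P,I)\setminus\{0\}$ the curve $\phi(t)=t\phi_0$ is constant in $\bP^N$ at $[\phi_0]$, while a short basis-rescaling computation shows $\lim_{t\to\infty}\imath(t\phi_0)=\ker\phi_0\oplus\mathrm{im}\,\phi_0\subset P\oplus I$. Hence $([\phi_0],\ker\phi_0\oplus\mathrm{im}\,\phi_0)\in\G(n)$; setting this limit equal to $N_P\oplus N_I$ forces $\ker\phi_0=N_P$ and $\mathrm{im}\,\phi_0=N_I$, and such $\phi_0$ form an open dense subset of $\Hom_Q(P/N_P,N_I)$. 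Properness of $\varphi$ then yields the inclusion $\bP(\Hom_Q(P/N_P,N_I))\subset\varphi^{-1}(N_P\oplus N_I)$ by projective closure.

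The main obstacle is the reverse inclusion: one must rule out points $([\phi_0],N_P\oplus N_I)\in\G(n)$ with $\phi_0$ not factoring as $P\twoheadrightarrow P/N_P\to N_I\hookrightarrow I$. The cleanest route is through the quiver-representation description of $\G(n)$ promised in the abstract: once $\G(n)$ itself is realized as a quiver Grassmannian for an auxiliary module, $\varphi$ becomes a natural forgetful morphism whose fibers are standard sub-Grassmannians, and the target $\bP(\Hom_Q(P/N_P,N_I))$ drops out of a direct module-theoretic computation. As a backup, one can combine $SL_n^a$-equivariance with a contracting one-parameter subgroup acting on $\bA^N$ to reduce arbitrary approaching curves to straight-line limits and thereby bound the fiber from above.
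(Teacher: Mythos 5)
Your high-level plan---use $SL_n^a$-equivariance to reduce to canonical orbit representatives and then compute the fiber by a limit-plus-closure argument---matches the paper's structure: the paper first computes the fiber over each torus-fixed point $U_{\bf J}$ (Proposition \ref{prop:fiberJ}, Corollary \ref{cor:ij}), extends to arbitrary $U$ via $SL_n^a$-equivariance (Theorem \ref{thm:maingeom}), and only then identifies the answer with $\Hom_Q(P/N_P,N_I)$ by an indecomposable-by-indecomposable count (Proposition \ref{prop:Hom}, Theorem \ref{thm:quiver}).

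However, your lower bound via straight-line limits has a genuine gap. You correctly compute $\lim_{t\to\infty}\imath(t\phi_0)=\ker\phi_0\oplus\mathrm{im}\,\phi_0\subset P\oplus I$, but $\mathrm{im}\,\phi_0\simeq P/\ker\phi_0$ as $Q$-modules, so the only submodules reachable this way satisfy $N_I\simeq P/N_P$. That fails in general, and already for $n=4$: take the fixed point $U_{\bf J}$ with ${\bf J}=(\{2\},\{2,3\},\{2,3,4\})$. Then $P/N_P\simeq M_{1,3}$ while $N_I\simeq M_{1,1}\oplus M_{2,2}\oplus M_{3,3}$---same dimension vector but not isomorphic, so there is no $\phi_0$ with $\ker\phi_0=N_P$ and $\mathrm{im}\,\phi_0=N_I$; the set you assert to be ``an open dense subset of $\Hom_Q(P/N_P,N_I)$'' is empty. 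The fiber over this $U_{\bf J}$ is still nonempty (it is $\bP\Hom_Q(M_{1,3},N_I)=\bP(\Hom(M_{1,3},M_{1,1}))=\mathrm{pt}$, i.e.\ the span of $E_{2,1}$), but to realize it as a limit of points of $\G^o(n)$ one needs a curve $g(t)\in\fn_-^a$ with a dominant entry $tE_{2,1}$ \emph{and} subdominant terms such as $t^{2/3}E_{3,1}$ and $t^{1/3}E_{4,1}$ that drive the flag component to $U_{\bf J}$ while the $\bP(V)$-component still tends to $[E_{2,1}]$. Straight lines in the big cell cannot reach such boundary points, so this step must be repaired before the closure argument applies.

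On the reverse inclusion: the paper does not realize $\G(n)$ itself as a quiver Grassmannian. Instead it embeds $\partial\G(n)$ into $\bP(\fn_-^a)\times\prod_d\bP(\Lambda^d L)$ and reuses the Pl\"ucker-type vanishing relations already proved for the Grassmannian graph closures $\G(d,n)$ to force the coordinates of $f$ that are disallowed by Proposition \ref{prop:fiberJ} to vanish; the translation into $\Hom_Q(P/N_P,N_I)$ is then a separate, purely module-theoretic computation (Proposition \ref{prop:Hom}). Your proposed route through an auxiliary quiver Grassmannian realization of $\G(n)$ would indeed be clean if it existed, but it is not what the paper does and would have to be constructed from scratch; likewise the $\bG_m$-contraction backup is only a sketch and would still need the fiber to be identified explicitly.
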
	
   
As in the classical case, the study of the geometry of the flag varieties 
comes together with the study of certain combinatorial and algebraic structures.
In \cite{F5} we introduced certain generalization of the classical 
poset polytopes (see \cite{HOT,Stan,T}). In order to adjust the setup for the
flagged case, we introduce the marked version of the polytopes in the 
spirit of \cite{ABS,FaFo}. The resulting polytopes $X_{\la,M}$ depend on
a dominant integral $\msl_n$ weight $\la$ and on a non-negative integer $M$. 
Let $S_{\la,M}\subset X_{\la,M}$ be the set of integral points. We prove the
following theorem, which is important for understanding of the basic algebraic and geometric
properties of the graph closures $\G(n)$.      

\begin{theoremA}
For any dominant integral weights $\la,\la'$ and $M,M'\ge 0$ one has
$S_{\la,M}+S_{\la',M'}=S_{\la+\la',M+M'}$ and $X_{\la,M}+X_{\la',M'}=X_{\la+\la',M+M'}$.
\end{theoremA}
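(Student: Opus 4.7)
The plan is to establish the two Minkowski-sum identities by proving each inclusion separately. The inclusions $X_{\la,M}+X_{\la',M'}\subseteq X_{\la+\la',M+M'}$ and $S_{\la,M}+S_{\la',M'}\subseteq S_{\la+\la',M+M'}$ are the easy direction: the defining system of inequalities for $X_{\la,M}$ is linear, and in particular additive, in the parameters $(\la,M)$ playing the role of markings, so summing a feasible pair for $(\la,M)$ and $(\la',M')$ yields a point that is feasible for $(\la+\la',M+M')$. Integrality is evidently preserved under coordinate-wise addition, so the integer version of this inclusion is automatic.

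The non-trivial direction is $X_{\la+\la',M+M'}\subseteq X_{\la,M}+X_{\la',M'}$, which I would prove by constructing an explicit decomposition $x=y+y'$ using a greedy procedure along a linear extension of the poset underlying the marked polytopes. Process the coordinates in this order and, at each step, choose $y$ at the current coordinate to be the largest real value such that both the $X_{\la,M}$-inequalities involving already-processed coordinates and the $X_{\la',M'}$-inequalities required on $x-y$ remain satisfied. The feasibility of this interval at every step follows from the fact that $x$ satisfies the $(\la+\la',M+M')$-inequalities, which split termwise as sums of the corresponding $(\la,M)$- and $(\la',M')$-inequalities; this additivity is precisely what makes the greedy admissible set non-empty at each step.

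For the integer statement $S_{\la+\la',M+M'}\subseteq S_{\la,M}+S_{\la',M'}$, I would run the same greedy procedure but take the floor of the maximum feasible value at each step. The main obstacle is to verify that flooring does not propagate a violated inequality to a later step; the argument rests on the marked polytopes $X_{\la,M}$ having integer vertices and a totally unimodular constraint matrix, a property inherited from the unmarked order/chain polytope setting and from the marked generalizations of \cite{ABS,FaFo}. Concretely, at every step the admissible interval has integer endpoints, so its floor is itself admissible, and the complementary point $x-y$ also lies in $S_{\la',M'}$. When $M=M'=0$ the argument specializes to the standard integer decomposition property of unmarked poset polytopes, which serves as a consistency check and as the base case of an alternative inductive proof on $M+M'$ where one unit of the marking is absorbed into a specific vertex of the polytope.
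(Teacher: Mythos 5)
Your easy inclusion is correct and matches the paper. The non-trivial direction, however, has a genuine gap, and the overall strategy diverges substantially from the paper's.

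The main problem is the appeal to total unimodularity of the constraint matrix of $X_{\la,M}$. This claim is not established in the paper, is not a known feature of marked chain polytopes in the sense of \cite{ABS,FaFo}, and is almost certainly false for the FFLV polytopes $X_{\la,0}$ themselves once $n\ge 4$. If the constraint system were TU, the Minkowski-sum/IDP statement for $X_{\la,0}$ would be an immediate linear-algebra consequence; instead, the equality $S_{\la,0}+S_{\la',0}=S_{\la+\la',0}$ is a nontrivial theorem of Feigin--Fourier--Littelmann (\cite{FFL1,FFL2}) whose proof goes through representation theory, not unimodularity. Your greedy decomposition along a linear extension, and the ``take floors, TU guarantees integrality'' step, therefore have no foundation; this is the crux of the difficulty, not a technicality. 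The greedy argument for the real case is also underspecified: at each step one must maintain feasibility of all as-yet-unconstrained inequalities, and the claimed termwise split of the right-hand sides does not by itself show the admissible interval stays nonempty.

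The paper takes the opposite route and avoids the issue. It first proves the integer statement, and then derives the polytope statement by rational approximation. For the integer statement it reduces the general case to a single increment: since $S_{\la,0}+S_{\la',0}=S_{\la+\la',0}$ is known (\cite{FFL1}) and $S_{0,M}+S_{0,M'}=S_{0,M+M'}$ is the simplex case, it suffices to prove $S_{\la,M}+S_{0,1}=S_{\la,M+1}$. That single step is proved by linear-programming duality, following \cite{F5}: set $M(x)=\max_{P'\subset P}\bigl(\sum_{(i,j)\in P'}x_{i,j}-\sum_d m_d\,w(P'\cap P_d)\bigr)$, so $x\in S_{\la,M}$ iff $M(x)\le M$, reformulate $M(x)$ via collections of chains (Lemma~\ref{lem:manychains}), and then exhibit, via the dual LP, a coordinate $\delta\in P$ with $M(x-\mathbf{1}_\delta)\le M(x)-1$. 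Your closing remark about ``absorbing one unit of the marking'' is in the right spirit and is exactly the $S_{\la,M}+S_{0,1}=S_{\la,M+1}$ reduction, but you neither identify that reduction explicitly nor supply the LP-duality mechanism (or any substitute) that makes it work. To repair your proposal, drop the TU claim, make the reduction to one unit of $M$ explicit, and prove the existence of a coordinate $\delta$ to subtract, e.g.\ by the duality argument of \cite{F5} or some equivalent combinatorial device.
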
	
  
Recall that the classical flag varieties admit embeddings to the projectivizations of the irreducible highest weight modules $L_\la$ (see e.g. \cite{Fu}). Similarly,
the PBW degenerate flag varieties can be realized inside the PBW degenerate representation $L_\la^a$ \cite{F1}. We show that varieties $\G(n)$ also admit
similar embeddings into $\bP(L_{\la,M})$ for certain cyclic $SL_n^a$ modules
$L_{\la,M}$, which we construct explicitly. 
Combining all the available
structures together, we obtain several results about representations
$L_{\la,M}$ and varieties $\G(n)$. In particular, one derives the existence 
of certain toric degenerations (in the spirit of \cite{FaFL1,FaFL2,FFL2,M}).

\begin{theoremA}
The elements $f^\bs\ell_{\la,M}$ form a basis of $L^a_{\la,M}$. The homogeneous
coordinate ring of $\G(n)$ is isomorphic to the direct sum of duals of $L^a_{\la,M}$. The varieties $\G(n)$ admit flat degenerations to the  toric varieties
defined by the polytopes $X_{\la,M}$. 
\end{theoremA}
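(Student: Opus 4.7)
The plan is to establish the three statements together, with Theorem~B (the Minkowski/semigroup property of $S_{\la,M}$) serving as the central combinatorial input.

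For part (i), by construction $L^a_{\la,M}$ is cyclic on $\ell_{\la,M}$ for the abelian nilpotent radical of $SL_n^a$, so ordered PBW monomials $f^\bs\ell_{\la,M}$ automatically span. To restrict the indexing to $\bs\in S_{\la,M}$, I would derive the flagged analogues of the degenerate Pl\"ucker and FFLV-type straightening relations used in \cite{F1,F2}, together with new relations arising from the marking parameter $M$. Linear independence of the surviving monomials would then follow by comparing the upper bound $|S_{\la,M}|$ on the dimension with a lower bound, either from explicit dual functionals supplied by the embedding $\G(n)\hookrightarrow\bP(L_{\la,M})$ or from a character/specialization argument.

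For part (ii), I would use the closed embedding $\G(n)\hookrightarrow\bP(L_{\la,M})$ to identify each multigraded piece of $\bC[\G(n)]$ with a quotient of $(L^a_{\la,M})^*$. The required equality amounts to the surjectivity of the multiplication
\[
(L^a_{\la,M})^*\otimes(L^a_{\la',M'})^*\longrightarrow (L^a_{\la+\la',M+M'})^*,
\]
which on the dual side is an embedding $L^a_{\la+\la',M+M'}\hookrightarrow L^a_{\la,M}\otimes L^a_{\la',M'}$ compatible with the PBW bases from (i); this reduces directly to the Minkowski identity $S_{\la,M}+S_{\la',M'}=S_{\la+\la',M+M'}$ from Theorem~B.

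For part (iii), I would filter $\bC[\G(n)]$ by a total order on PBW monomials compatible with the polytopes $X_{\la,M}$. By part (i) the associated graded algebra has a basis indexed by $\bigsqcup_{\la,M} S_{\la,M}$, and by Theorem~B its multiplication is addition of indices, identifying it with the semigroup algebra of the integral points in the cone over the $X_{\la,M}$; taking Proj yields the asserted toric variety, and flatness of the one-parameter family is provided by the standard Rees-algebra construction. I expect the main obstacle to be producing a complete set of straightening relations in part (i): the flagging parameter $M$ introduces new relations beyond the unflagged PBW case, and the careful matching of these relations to the defining inequalities of $X_{\la,M}$, so that the spanning set is exactly $S_{\la,M}$, is the real work. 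Once (i) is in place, parts (ii) and (iii) follow essentially formally from Theorem~B.
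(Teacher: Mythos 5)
Your proposal is correct in outline and follows essentially the route the paper takes: Theorem~B (the Minkowski sum identity for $S_{\la,M}$) is indeed the central input, parts (ii) and (iii) are handled exactly as you describe by quoting the arguments of Corollaries~3.18 and~3.19 of \cite{F5}, and the spanning part of (i) is obtained by straightening via the explicit relations $f_{\beta_1}^{a_1}\cdots f_{\beta_r}^{a_r}\ell_{\la,M}=0$ whenever $\sum a_i > M+\sum(\la,\beta_i)$ (Lemma~\ref{lem:rel}) combined with the chain-collection characterization of $X_{\la,M}$ (Lemma~\ref{lem:manychains}) --- this is the ``complete set of straightening relations matching the inequalities'' that you correctly flag as the real work.

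The one place where your route deviates from the paper, and where a little care is needed, is linear independence in (i). You suggest obtaining a lower bound on $\dim L^a_{\la,M}$ from ``explicit dual functionals supplied by the embedding $\G(n)\hookrightarrow\bP(L_{\la,M})$.'' That argument is in danger of being circular: the identification of the homogeneous coordinate ring of $\G(n)$ with $\bigoplus(L^a_{\la,M})^*$ is precisely part (ii) of the theorem, which in the paper is derived from part (i), not the other way around. The paper instead gets linear independence purely combinatorially: by Theorem~\ref{thm:MS} the family $\{S_{\la,M}\}$ has the Minkowski sum property, so the favourable modules machinery of \cite{FFL2} applies and forces the monomials $f^\bs\ell_{\la,M}$, $\bs\in S_{\la,M}$, to be independent. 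Your alternative ``character/specialization'' suggestion is closer in spirit to what \cite{FFL2} does internally and would be the safer of your two options; if you go the dual-functional route you must make sure the embedding $\G(n)\hookrightarrow\bP(L_{\la,M})$ and the nondegeneracy of the pairing are established independently of part (i), which is more delicate than it may look.
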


We note that (certain modifications of) all the theorems above hold true for
arbitrary partial degenerate flag varieties. We discuss the general case in a separate (last) section of the paper and restrict to the case of complete flags otherwise.

Our paper is organized as follows.
In Section \ref{sec:comb} we define a family of convex polytopes and prove the Minkowski sum property for the sets of integral points inside these polytopes.   
In Section \ref{sec:repth} we define a class of cyclic representations of the
degenerate special linear Lie algebra, describe the defining relations of these representations and construct monomial bases in terms of the polytopes from Section \ref{sec:comb}. Section \ref{sec:geom} is devoted to the study of
the geometry of the graph closures of the birational maps from a projective
space to the PBW degenerate flag varieties. In particular, we describe the fibers
of the natural projections to the space of flags in terms of the representation
theory of quivers. In Section \ref{sec:partial} we generalize all the results
of the paper to the case of parabolic subalgebras and partial flag varieties.  

\section*{Acknowledgments}
We are grateful to Igor Makhlin and Wojciech Samotij for useful discussions.

\section{Combinatorics}\label{sec:comb}
The goal of this section is to define and study certain (marked) poset polytopes
attached to the poset of positive roots of type $A$. Throughout the paper
we use the notation $[n]=\{1,\dots,n\}$.
  
\subsection{Polytopes}
For a positive integer $n$ we consider the poset $\overline{P}$ consisting of pairs $(i,j)$,
$1\le j \le i\le n$ such that
$(i,j)\le (i_1,j_1)$ if $i\le i_1$ and $j\le j_1$.
Let $P\subset \overline{P}$ be the subposet consisting of pairs $(i,j)$ with $i>j$;
hence $\overline{P}\setminus P =\{(i,i), i\in [n]\}$.
Let $P_d\subset P$, $d\in [n-1]$ be the subposet consisting of pairs $(i,j)$ 
such that 
$i> d\ge j$. We note that $|P|=N=n(n-1)/2$ and $|P_d|=d(n-d)$.

We recall the definition of the FFLV polytope attached to the poset $P$ (see \cite{FFL1,Vin}).
For a collection ${\bbm}=(m_1,\dots,m_{n-1})\in\bZ_{\ge 0}^{n-1}$ let $X_\bbm\subset \bR_{\ge 0}^P$ be the polytope
consisting of points $(x_{i,j})_{(i,j)\in P}$ subject to the following set of inequalities. The 
inequalities are labeled by chains $C\subset P$. Let $(i_{min},j_{min})$ and $(i_{max},j_{max})$
be the minimal and the maximal elements in $C$ (in particular, $j_{min}<i_{max}$). 
Then the inequality corresponding to $C$ is of the form  
\[
\sum_{(i,j)\in C} x_{i,j}\le m_{j_{min}}+m_{j_{min}+1}+\dots + m_{i_{max}-1}.
\]
\begin{rem}
In what follows we denote the number $j_{min}$ by $s(C)$ and the number $i_{max}$
by $f(C)$ (the start and finish of $C$). We note that the chain $C$ can be extended
to a chain $\overline{C}\subset \overline{P}$ by adding two elements $(s(C),s(C))$ and
$(f(C),f(C))$. If one attaches the number $m_1+\dots + m_{d-1}$ to the vertex $(d,d)$, then
$m_{j_{min}}+\dots + m_{i_{max}-1}$ is the difference of numbers attached to 
$(f(C),f(C))$ and $(s(C),s(C))$. 
\end{rem}

We note that $X_\bbm$ are marked chain polytopes (see \cite{ABS,Stan})
with the marked elements from $\overline{P}\setminus P$.
The special poset elements $(d,d)$ are marked by $m_1+\dots+m_{d-1}$ (the 
minimal element $(1,1)$ is marked by $0$). 

Let $S_\bbm= X_\bbm\cap \bZ^P$ be the set of integer points. Then the following holds true \cite{FFL1}: for any
two collections $\bbm,\bbm'\in\bZ_{\ge 0}^{n-1}$ one has  
\[
X_\bbm + X_{\bbm'}=X_{\bbm+\bbm'},\ S_\bbm + S_{\bbm'}=S_{\bbm+\bbm'}.
\]

Now let us fix a non-negative integer $M$. 
We define a polytope $X_{\bbm,M}\subset \bR_{\ge 0}^P$ by the following set of inequalities
labeled by subposets $P'\subset P$:
\begin{equation}
	\sum_{\al\in P'} s_\al \le M + \sum_{d=1}^{n-1} m_d w(P'\cap P_d),
\end{equation} 
where $w(P'\cap P_d)$ is the width of the intersection of $P'$ and $P_d$
\cite{D,Stan}. 
Let $S_{\bbm,M}=X_{\bbm,M}\cap \bZ_{\ge 0}^P$. The main goal of this section is to show that
$S_{\bbm,M} + S_{\bbm',M'}=S_{\bbm+\bbm',M+M'}$.

\begin{example}
Let $\bbm=0$. Then 	$X_{\bbm,M}$ is a simplex consisting of all points with non negative coordinates such that the sum of these coordinates does not exceed $M$. 
\end{example}

\begin{example}
Let $M=0$. Then $X_{\bbm,0}$ coincides with the FFLV polytope $X_\bbm$, which is 
implied by Lemma \ref{lem:manychains} below.
\end{example}

\begin{lem}\label{lem:manychains}
The polytopes $X_{\bbm,M}\subset \bR_{\ge 0}^P$ can be equivalently defined by the the following set of inequalities labeled by collections $C_1,\dots,C_r$ of chains in $P$.
The inequality corresponding to a collection $(C_a)_{a=1}^r$ is given by
 \begin{equation}\label{eq:chainsineq}
\sum_{(i,j)\in \cup C_a} x_{i,j}\le M + \sum_{a=1}^r (m_{s(C_a)}+\dots + m_{f(C_a)-1}).
 \end{equation}
\end{lem}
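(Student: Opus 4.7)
The plan is to prove equality of the two polytope descriptions by showing each inequality in one family follows from the other.

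For the easy direction, every chain collection inequality \eqref{eq:chainsineq} follows from the subposet inequality for $P' = \bigcup_a C_a$. Rewriting the chain RHS as $M + \sum_d m_d \cdot |\{a : s(C_a) \le d < f(C_a)\}|$, it suffices to show $|\{a : s(C_a) \le d < f(C_a)\}| \ge w(P' \cap P_d)$ for every $d$. This follows from two observations: if $(i,j) \in C_a \cap P_d$, then $s(C_a) \le j \le d < i \le f(C_a)$, so the chains meeting $P_d$ are contained in those with $s \le d < f$; and the nonempty restrictions $C_a \cap P_d$ form a chain cover of $P' \cap P_d$, whose size is at least $w(P' \cap P_d)$ by Dilworth's theorem. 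Hence the chain RHS dominates the subposet RHS.

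The hard direction reduces to the combinatorial claim: for every subposet $P' \subset P$ there is a chain partition $C_1 \sqcup \cdots \sqcup C_r = P'$ satisfying $|\{a : s(C_a) \le d < f(C_a)\}| = w(P' \cap P_d)$ for every $d$. Given such a partition, \eqref{eq:chainsineq} coincides with the subposet inequality for $P'$, which is thereby deduced from chain inequalities. My proposed construction is an iterative greedy procedure: pick $(i_*, j_*) \in P'$ minimizing $j$, then $i$; extend to a chain by repeatedly appending the element $(i', j') \in P'$ above the current one with smallest possible $j'$, then smallest $i'$; subdivide the extracted chain at ``skip'' points, where consecutive entries $(i_t, j_t), (i_{t+1}, j_{t+1})$ satisfy $j_{t+1} > i_t$ (so that their intervals $[j_t, i_t)$ and $[j_{t+1}, i_{t+1})$ fail to overlap), ensuring every extracted chain is gap-free; remove these chains from $P'$ and recurse. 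For $P' = P$ this recovers the nested L-decomposition of the staircase poset.

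The main obstacle is verifying the width-matching condition for arbitrary $P'$. This is subtle because not every chain decomposition works: e.g., for $P' = P$ with $n = 4$, the ``diagonal'' decomposition $\{(2,1),(3,2),(4,3)\} \sqcup \{(3,1),(4,1),(4,2)\}$ has both chains intersecting $P_1$ and $P_3$, contradicting $w(P_1) = w(P_3) = 1$, whereas the greedy L-algorithm produces the correct decomposition $\{(2,1),(3,1),(4,1),(4,2),(4,3)\} \sqcup \{(3,2)\}$. I would prove correctness by induction on $|P'|$, showing that each subdivided chain contributes exactly $1$ at precisely those levels $d$ where $w(P' \cap P_d)$ drops by $1$ upon removal. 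A less explicit alternative is LP duality applied to the minimum-cost chain-cover LP with cost $c(C) = m_{s(C)} + \cdots + m_{f(C)-1}$: by strong duality the minimum equals the maximum of $\sum_{\alpha \in P'} z_\alpha$ over $z \ge 0$ satisfying the FFLV chain inequalities on $P'$, and one checks that this maximum equals $\sum_d m_d w(P' \cap P_d)$ using the integrality of the FFLV polytope from \cite{FFL1}.
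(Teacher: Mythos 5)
Your proposal follows the same strategy as the paper: both directions are handled identically, and the hard direction rests on the same greedy chain extraction for a given subposet $P'$. The one genuinely useful difference is the subdivision step: you cut the extracted chain at ``skip'' points where consecutive entries $(i_t,j_t)<(i_{t+1},j_{t+1})$ satisfy $j_{t+1}>i_t$, so that every chain in the final cover is gap-free. The paper's stated algorithm does \emph{not} subdivide, and this matters. Take $n\ge 5$ and $P'=\{(2,1),(5,4)\}$: the unsubdivided greedy yields the single chain $C_1=P'$, with $s(C_1)=1$, $f(C_1)=5$, so the chain right-hand side contributes $m_1+m_2+m_3+m_4$, whereas $\sum_d m_d\,w(P'\cap P_d)=m_1+m_4$. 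The claimed equality in the paper's proof therefore fails unless one first subdivides into $\{(2,1)\}\sqcup\{(5,4)\}$. So your construction is the more careful one, and the subdivision is not optional.

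That said, your argument is incomplete where the paper's is complete: you only sketch the width-matching claim (``induction on $|P'|$ showing each chain contributes exactly $1$ at levels where the width drops''), while the paper gives a full induction proving that the number of greedy chains meeting $P'\cap P_d$ equals $w(P'\cap P_d)$. The cleanest way to finish is to combine the two: the paper's induction gives $\#\{a:C_a\cap P_d\neq\emptyset\}=w(P'\cap P_d)$ for the unsubdivided chains; subdividing does not change the count of chains meeting each $P_d$ (the subchains of any $C_a$ have pairwise disjoint $[s,f)$-intervals, each of them gap-free, so exactly one subchain of $C_a$ meets $P_d$ whenever $C_a$ does); and for gap-free chains $\#\{a:s(C_a)\le d<f(C_a)\}=\#\{a:C_a\cap P_d\neq\emptyset\}$. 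Your LP-duality alternative is plausible but not clearly simpler: the chain-cover LP must be shown to admit integral optima on both sides, which is roughly the same amount of combinatorial work packaged differently.
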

\begin{proof}
Assume that for any subposet $P'\subset P$ one has
\begin{equation}\label{eq:posetineq}
\sum_{(i,j)\in P'} x_{i,j}\le \sum_{d=1}^{n-1} m_d w(P'\cap P_d).
\end{equation}
Given a collection of chains $C_1,\dots,C_r$ in $P$, let $P'$ be their union.
Then we know that
 \[
\sum_{(i,j)\in P'} x_{i,j}\le M + \sum_{d=1}^{n-1} m_d w(P_d\cap P').
\]
Now the right hand side is no larger than the right hand side of \eqref{eq:chainsineq},
because a chain $C_a$ contributes a summand $m_d$ to the right hand side 
of \eqref{eq:chainsineq} whenever it intersects with $P_d$.
	
We are left to show that if a point $(x_{i,j})\in\bR_{\ge0}^P$ satisfies conditions \eqref{eq:chainsineq}, then for any subposet $P'\subset P$ the inequality
\eqref{eq:posetineq} holds true. Without loss of generality we assume that $x_{i,j}>0$
for $(i,j)\in P'$.
For a subposet $P'$ we construct a specific covering of $P'$ by chains $C_1,\dots,C_r$.
We first define $C_1$. Let $(i_1,j_1)$ be the element if $P'$ with the minimal possible value $j_1$ of the second coordinate and the minimal possible $i_1$ among the 
elements with this property. 
Let $(i_2,j_1)$,
$i_2\ge i_1$  be the largest element in $P'$ whose second coordinate is $j_1$.
We include to $C_1$ all the elements of $P'$ between $(i_1,j_1)$ and $(i_2,j_1)$.
Now let $j_2>j_1$ be the minimal element such that $P'$ contains an element whose 
second coordinate is $j_2$ and the first coordinate is no smaller than $i_2$.
Let $(i_3,j_2)$ be the smallest element with this property and let $(i_4,j_2)$
with $i_4\ge i_3$ be the largest element in $P'$ whose second coordinate is $j_2$
and the first coordinate is no smaller than $i_3$. 
We include to $C_1$ all the elements of $P'$ between $(i_3,j_2)$ and $(i_4,j_2)$.
We proceed further until it is possible, i.e. until we are able to find $j_{\bullet+1}>j_\bullet$ with the properties as above. 
By construction, $C_1$ is indeed a chain and for any $(i,j)$ such that there exists an element
$(i',j')\in C_1$ with $(j\le j')$, $i>i'$ one has $(i,j)\notin P'$.   
After $C_1$ is fixed, the chain $C_2$ is constructed using the same algorithm for the subposet $P'\setminus C_1$. We proceed until we cover the whole set $P'$ with the chains $C_1,\dots,C_r$.
We illustrate the procedure by the following picture. 
The elements of $P$ are represented by circles (the element $(i,j)$ is located in 
the $i$-th row and $j$-th column). 
The elements of $P'$ are filled circles and the chains $C_i$ are marked by segments.

\[
\begin{tikzpicture}
	\draw (0,0) circle (2pt);
	\filldraw (1,0) circle (2pt);
	\filldraw (2,0) circle (2pt);
	\filldraw (3,0) circle (2pt);
	\filldraw (4,0) circle (2pt);
	\draw (5,0) circle (2pt);
	
	\filldraw (0,1) circle (2pt);
	\filldraw (1,1) circle (2pt);
	\filldraw (2,1) circle (2pt);
	\filldraw (3,1) circle (2pt);
	\filldraw (4,1) circle (2pt);
	
	\filldraw (0,2) circle (2pt);
	\filldraw (1,2) circle (2pt);
	\filldraw (2,2) circle (2pt);
	\filldraw (3,2) circle (2pt);
	
	\filldraw (0,3) circle (2pt);
	\draw (1,3) circle (2pt);
	\draw (2,3) circle (2pt);
	
	\filldraw (0,4) circle (2pt);
	\filldraw (1,4) circle (2pt);
	
	\filldraw (0,5) circle (2pt);
	
	\draw (0,5) -- (0,4) -- (0,3) -- (0,2) -- (0,1) -- (1,1) -- (1,0) -- (2,0) -- 
	(3,0) -- (4,0);
	
	\draw (1,4) -- (1,3) -- (1,2) -- (2,2) -- (2,1) -- (3,1) -- (4,1);
	
\end{tikzpicture}
\]

One easily sees that
\[
\sum_{a=1}^r\sum_{(i,j)\in C_a} x_{i,j} = \sum_{(i,j)\in P'} x_{i,j}.
\]
We claim that
\[
\sum_{a=1}^r (m_{s(C_a)}+\dots +m_{f(C_a)-1}) = \sum_{d=1}^{n-1} m_d w(P'\cap P_d).
\]
In fact, the left hand side is no smaller than the right hand side because the poset
$P'\cap P_d$ can not be covered by less than $w(P'\cap P_d)$ chains; however, any
chain $C_a$ intersecting $P'\cap P_d$ adds a summand $m_d$ to the left hand side.
In the opposite direction, let $C_{r_1},\dots,C_{r_w}$,  $1\le r_1 < \dots < r_w \le r$
be chains that intersect nontrivially with $P'\cap P_d$. We show that the width
of the intersection of $P'\cap P_d$ with $\cup_{u=1}^w C_{r_u}$ is exactly $w$. The proof
goes by induction on $w$ (note that the case $w=1$ is trivial).

Let us consider  $(P'\cap P_d)\setminus C_{r_1}$. Let 
$(p_1,q_1),\dots, (p_y,q_y)$ be the maximal size set of pairwise non comparable 
elements of $(P'\cap P_d)\setminus C_{r_1}$. Without loss of generality we assume 
that $q_1<\dots<q_y$ and  $p_1>\dots > p_y$. Then there are two options.
The first option: there exists a point $(p,q)\in C_{r_1}\cap  (P'\cap P_d)$ such that
$p>p_1$ and $q<q_1$. Then the width of $(P'\cap P_d)\bigcap (\cup C_{r_\bullet})$
is equal to $y+1$ and by induction $y=w-1$. The second option: 
$C_{r_1}\cap  (P'\cap P_d)$ does not contain points $(p,q)$ such that $p>p_1$ and $q>q_1$. Since $t_1$ is smaller than the index of the chain passing through $(p_1,q_1)$
there exists $p<p_1$ such that $(p,q)\in C_{r_1}\cap (P'\cap P_d)$ for some $q$.
Since we are inside the second option, $q<q_1$. But then by construction of
our chains one gets $(p_1,q_1)\in C_{r_1}$, which is a contradiction.
\end{proof}

\subsection{Dual problem}
Now we are ready to prove the following theorem.

\begin{thm}\label{thm:MS}
For any $\bbm, \bbm'\in\bZ_{\ge 0}^{n-1}$ and $M,M'\ge 0$ the Minkowski sum of 
$S_{\bbm,M}$ and $S_{\bbm',M'}$ is equal to $S_{\bbm+\bbm',M+M'}$.
\end{thm}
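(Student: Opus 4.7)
The inclusion $S_{\bbm,M}+S_{\bbm',M'}\subseteq S_{\bbm+\bbm',M+M'}$ is immediate from Lemma \ref{lem:manychains}: summing the chain-collection inequalities for $\bs^{(1)}\in S_{\bbm,M}$ and $\bs^{(2)}\in S_{\bbm',M'}$ yields the corresponding inequality for $\bs^{(1)}+\bs^{(2)}$ in the sum. The substantive content is the reverse inclusion, which I would obtain by a Farkas-type argument (consistent with the section title ``Dual problem'').

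Specifically, I would recast the decomposition as an integer feasibility problem for $\bs^{(1)}\in\bZ_{\ge 0}^P$: impose $\bs^{(1)}\le \bs$ together with the chain-collection upper bounds $\sum_{\cup C_a} s^{(1)}_\alpha \le M+\sum_a r_{C_a,\bbm}$, where $r_{C,\bbm}:=m_{s(C)}+\cdots+m_{f(C)-1}$, and complementary lower bounds $\sum_{\cup C'_b} s^{(1)}_\alpha \ge \sum_{\cup C'_b} s_\alpha - M'-\sum_b r_{C'_b,\bbm'}$ enforcing $\bs-\bs^{(1)}\in S_{\bbm',M'}$. By LP duality, feasibility of the real relaxation reduces, after isolating the extremal dual certificates, to verifying the combinatorial inequality
\[
\sum_{(\cup C_a)\cap(\cup C'_b)} s_\alpha \;\le\; (M+M') + \sum_a r_{C_a,\bbm} + \sum_b r_{C'_b,\bbm'}
\]
for every pair of chain collections $(C_a)$ and $(C'_b)$.

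To prove this key inequality from the hypothesis $\bs\in S_{\bbm+\bbm',M+M'}$, I would exploit the fact that the minimum chain-collection right-hand side over a subposet $P''$ equals $\min_\bbm(P''):=\sum_d m_d\,w(P''\cap P_d)$ (this is the content of the subposet form of Lemma \ref{lem:manychains}), and this quantity is \emph{linear} in $\bbm$, so $\min_{\bbm+\bbm'}(P'')=\min_\bbm(P'')+\min_{\bbm'}(P'')$. Applied to $P''=(\cup C_a)\cap(\cup C'_b)$, the restrictions $C_a\cap P''$ and $C'_b\cap P''$ provide chain covers of $P''$ whose $\bbm$- and $\bbm'$-costs are dominated by $\sum_a r_{C_a,\bbm}$ and $\sum_b r_{C'_b,\bbm'}$ respectively. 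Choosing a chain collection $(D_e)$ that covers $P''$ and realizes $\min_{\bbm+\bbm'}(P'')$, and applying the $(\bbm+\bbm', M+M')$ chain-collection inequality for $\bs$ to $(D_e)$, delivers the required bound.

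The step I expect to be the main obstacle is integrality: once the LP relaxation is feasible one obtains a real $\bs^{(1)}$, but the theorem demands an integer decomposition. I would address this via the network-flow / total unimodularity structure of chain packings on the two-dimensional grid poset $P$, which makes the associated LP integral; alternatively one could proceed by induction on $M+M'$, reducing the base case $M=M'=0$ to the classical integer FFLV Minkowski-sum property from \cite{FFL1}.
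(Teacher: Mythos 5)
The easy inclusion is handled correctly, but the hard inclusion has two genuine gaps that the paper's proof circumvents by a reduction you omit.

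First, and most importantly, the paper does not attack the full decomposition LP directly. It first reduces to a single atomic step: since $S_{\bbm,0}+S_{\bbm',0}=S_{\bbm+\bbm',0}$ (FFLV) and $S_{0,M}+S_{0,M'}=S_{0,M+M'}$ (a simplex), the whole theorem follows once one shows $S_{\bbm,M}+S_{0,1}=S_{\bbm,M+1}$, i.e.\ that for $x\in S_{\bbm,M+1}$ one can peel off a single coordinate vector $\mathbf{1}_\delta$ with $x_\delta>0$ and land in $S_{\bbm,M}$. The resulting LP has a much simpler combinatorial structure (a single ``drop one unit'' decision), and it is \emph{this} LP whose dual is analyzed via the $k_{\alpha,\beta}$ flow variables and the argument imported from Theorem A.1 of \cite{F5}. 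You note this induction as an ``alternative,'' but the inductive step is exactly the atomic identity $S_{\bbm,M+1}=S_{\bbm,M}+S_{0,1}$, and you do not supply an argument for it — so the alternative route simply relocates the missing content.

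Second, the direct route you propose has two unresolved difficulties. (a) Farkas duality for your decomposition system does not a priori reduce to checking a single pair of chain collections $(C_a),(C'_b)$: an infeasibility certificate is an arbitrary nonnegative combination of the box constraints, the $(\bbm,M)$-upper bounds, and the $(\bbm',M')$-lower bounds, and you would have to prove that the extreme rays of the dual cone have the special ``one pair'' form before the key inequality you state can serve as a sufficient criterion. (b) Integrality: even granting LP feasibility, the constraint matrix whose rows are characteristic vectors of \emph{unions of chains} is not an interval or network matrix, so total unimodularity is not self-evident and is not established in the paper; the paper never needs it because the atomic step only ever modifies one coordinate by one, so integrality is automatic. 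Your observation that $\min_\bbm(P'')=\sum_d m_d\, w(P''\cap P_d)$ is linear in $\bbm$ is correct and pleasant, but by itself it does not close either gap. The take-away: start with the reduction to $S_{\bbm,M}+S_{0,1}=S_{\bbm,M+1}$, then run the LP duality; otherwise the duality analysis becomes substantially harder than advertised and the integrality question is left hanging.
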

\begin{proof}
One easily sees that $S_{\bbm,M}+ S_{\bbm',M'}\subset S_{\bbm+\bbm',M+M'}$.
Since 	$S_{\bbm,0}+S_{\bbm',0} = S_{\bbm+\bbm',0}$ and $S_{0,M}+S_{0,M'} = S_{0,M+M'}$,
it suffices to show that
$S_{\bbm,M}+ S_{0,1}= S_{\bbm,M+1}$. Following \cite{F5} we use the duality of linear
programming (see \cite{Sch}).

For $x=(x_{i,j})\in\bZ_{\ge 0}^P$ let 
\[
M(x) = \max_{P'\subset P} \left(\sum_{(i,j)\in P'} x_{i,j}\ - \sum_{d=1}^{n-1} m_d w(P'\cap P_d)\right).
\]
We note that $M(x)\ge 0$ (it is enough to take $P'=\emptyset$) and $x\in S_{\bbm,M}$
if and only if $M(x)\le M$. Our goal is to show that there exists an element $\delta\in P$
such that $M(x-\mathbf{1}_{\delta})\le M(x)-1$. To this end we reformulate the problem as follows.

By Lemma \ref{lem:manychains} the definition of $M(x)$ can be restated in terms of
collections of chains $C_1,\dots,C_r$, which we assume to be pairwise non-intersecting.
To such a set of chains we associate a collection of numbers $k_{\alpha,\beta}\in\{0,1\}$, $\al,\beta\in \overline{P}$, $\al<\beta$ in the following way:
\begin{itemize}
\item $k_{\al,\beta}=1$ for $\al,\beta\in P$ if and only if $\al$ and $\beta$ are
consecutive elements in one chain;
\item  for $\al\in P$, $\beta\notin P$ the value  $k_{\al,\beta}$ is one if and only if $\al$ is the maximal element of a chain $C_a$ and $f(C_a)=\beta$,
\item  for $\al\notin P$, $\beta\in P$ the value  $k_{\al,\beta}$ is one if and only if $\beta$ is the minimal element of a chain $C_a$ and $s(C_a)=\alpha$,
\item if $\al,\beta\notin P$, then $k_{\al,\beta}=0$.
\end{itemize}
Clearly, one gets all the collections $(k_{\al,\beta})\in\{0,1\}$ subject to the conditions: for all $\beta\in P$
\begin{equation}\label{eq:k=k<1}
\sum_{\substack{\al\in \overline{P}\\ \al<\beta}} k_{\al,\beta} =
\sum_{\substack{\gamma\in \overline{P}\\ \gamma>\beta}} k_{\beta,\gamma} \le 1.
\end{equation}
In terms of $k_{\al,\beta}$ the expression $M(x)$ to be maximized reads as
\[
\sum_{\substack{\al\in\overline{P},\beta\in P\\ \al<\beta}} x_\beta k_{\al,\beta} - 
\sum_{\substack{\al\in{P}, i\in [n]\\ \al<(i,i)}} (m_1+\dots +m_{i-1})k_{\al,(i,i)} +
\sum_{\substack{\beta\in{P}, i\in [n]\\(i,i)<\beta}} (m_1+\dots +m_{i-1})k_{(i,i),\beta}.
\]
We note that conditions \eqref{eq:k=k<1} are the same as the ones used in \cite{F5}. Hence 
the dual linear program also involves variables $g_\al$ and $h_\beta$ and 
the expression to minimize is the sum of all variables $g_\al$.
Now the same argument as in the proof of Theorem A.1 in \cite{F5} shows that for the
dual linear program one finds an element $\delta\in P$ such that  
$M(x-\mathbf{1}_\delta)$ is less than $M(x)$, where $M(x)$ is the (common) minimal value
for the dual program and maximal value for the initial program.
\end{proof}

\begin{cor}
For any $\bbm, \bbm'\in\bZ_{\ge 0}^{n-1}$ and $M,M'\ge 0$ one has 
$X_{\bbm,M}+X_{\bbm',M'}=X_{\bbm+\bbm',M+M'}$.
\end{cor}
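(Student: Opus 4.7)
The plan is to deduce the continuous Minkowski equality from the integer statement of Theorem \ref{thm:MS} via a standard scaling and density argument, exploiting that each $X_{\bbm,M}$ is a rational polytope whose defining inequalities depend linearly on the parameters.

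First I would note that the inclusion $X_{\bbm,M}+X_{\bbm',M'}\subseteq X_{\bbm+\bbm',M+M'}$ is immediate: for $x\in X_{\bbm,M}$ and $y\in X_{\bbm',M'}$ and any subposet $P'\subset P$, adding the defining inequalities for $x$ and $y$ shows that $x+y$ satisfies the inequality labeled by $P'$ with parameters $(\bbm+\bbm',M+M')$. The essential observation for the reverse inclusion is the scaling property: for every positive integer $k$ one has $k\cdot X_{\bbm,M}=X_{k\bbm,kM}$, since the right-hand side $M+\sum_d m_d w(P'\cap P_d)$ of each defining inequality is linear in $(\bbm,M)$. In particular the integer points of $k\cdot X_{\bbm,M}$ are exactly $S_{k\bbm,kM}$.

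To establish the reverse inclusion on rational points, fix $x\in X_{\bbm+\bbm',M+M'}\cap\bQ^P$ and pick a positive integer $k$ with $kx\in\bZ_{\ge 0}^P$. By the scaling property $kx\in S_{k(\bbm+\bbm'),k(M+M')}$, so Theorem \ref{thm:MS} yields a decomposition $kx=y+z$ with $y\in S_{k\bbm,kM}$ and $z\in S_{k\bbm',kM'}$. Dividing by $k$ and scaling back gives $y/k\in X_{\bbm,M}$ and $z/k\in X_{\bbm',M'}$, so $x=y/k+z/k$ lies in the Minkowski sum.

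Finally, since $X_{\bbm,M}$ and $X_{\bbm',M'}$ are compact their Minkowski sum is closed, and rational points are dense in the rational polytope $X_{\bbm+\bbm',M+M'}$, so the inclusion extends to every real point. There is no substantive obstacle here; the whole argument is a routine transfer from $S$ to $X$, and the only item worth double-checking is the linear scaling of the defining inequalities, which is visible from the formula in Section \ref{sec:comb}.
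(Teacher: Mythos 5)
Your argument is correct and is exactly the standard scaling-and-density transfer that the paper's one-line proof alludes to: use Theorem~\ref{thm:MS} for lattice points, scale to get the rational case (noting the defining inequalities are linear in $(\bbm,M)$ so $k\cdot X_{\bbm,M}=X_{k\bbm,kM}$), then pass to real points by density of rationals and closedness of the Minkowski sum of compact sets. This matches the paper's intended proof in both approach and substance.
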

\begin{proof}
The proof is standard: using the claim for integer points one derives the analogues statement for rationals, which implies the general case.
\end{proof}

\section{Representation theory}\label{sec:repth}
Recall the Cartan decomposition $\msl_n=\fn_-\oplus\fb$, $\fb=\fh\oplus\fn$ and let
us denote by $\Phi=\Phi_+\sqcup \Phi_-$ the set of roots of $\msl_n$.
In particular, the set of positive roots $\Phi_+$ consists of elements
$\al_{i,j}$, $1\le i\le j\le n-1$ with $\al_i=\al_{i,i}$ being the set of
simple roots. One has $\al_{i,j}=\al_i+\dots +\al_{j}$. For a positive
root $\al_{i,j}$ we denote by $f_{i,j}=f_{\al_{i,j}}\in\fn_-$ and $e_{i,j}=e_{\al_{i,j}}\in\fn$ the corresponding 
root vectors. In particular, $f_{i,j}=E_{j+1,i}$ (with $E_{\bullet,\bullet}$ 
being matrix units).   

Let $\la=\sum_{d=1}^{n-1} m_d\om_d$ be an integral dominant weight of the Lie algebra $\msl_n$ (i.e. $m_d\in\bZ_{\ge 0}$). Let $L_\la$ be the irreducible highest weight $\msl_n$-module of highest weight $\la$. Let $\ell_\la\in L_\la$ 
be the highest weight vector. The universal enveloping algebra $\U(\fn_-)$
generates $L_\la$ from $\ell_\la$, i.e. $L_\la=\U(\fn_-)\ell_\la$.

The standard PBW filtration on the universal enveloping algebra induces
the increasing filtration on $L_\la$:
\[
L_\la(s)=\mathrm{span}\{a_1\dots a_r \ell_\la: a_i\in\fn_-, r\le s\}.
\]
The associated graded space is denoted by $L_\la^a$, which is a module over the 
degenerate Lie algebra $\msl_n^a\simeq \fn^a_-\oplus \fb$, where $\fn^a_-$
is an abelian ideal and the subalgebra $\fb$ acts on $\fn^a_-$ as on the quotient
module $\fg/\fb$.  We denote the image of $\ell_\la$ inside $L_\la^a$ by the
same symbol. 

We consider the cyclic $\msl_n^a$ module $V\simeq V(0) \oplus V(1)$ with a cyclic vector $v$ spanning $V(0)$ and $V(1)\simeq \fn^a_-$. The subalgebra $\fb\subset\msl_n^a$ acts trivially on $V(0)$ and acts on $V(1)$ via the 
identification $V(1)\simeq \fg/\fb$. The action of $\fn_-^a\subset\msl_n^a$
is trivial on $V(1)$ and on $V(0)$ the action of $\fn^a_-$ comes from the
identification $V(0)\T \fn_-^a\simeq V(1)$.   

Given a non-negative integer $M$ we define 
\[
V_M=V^{\odot M} = \U(\fn^a_-)v^{\T M},\ 
L^a_{\la,M}= L_\la^a \odot V_M = \U(\fn_-^a) (\ell_\la\T v^{\T M}).
\]
In particular, $L^a_{\la,M}$ are $\msl_n^a$ modules and $L^a_{\la,0}\simeq L^a_\la$,
$L^a_{0,M}\simeq V_M$. 
We denote the cyclic vector of $L^a_{\la,M}$ by $\ell_{\la,M}$. 

\begin{lem}
	The defining relations of the cyclic $\msl_n^a$ module $V_M$ are of the form
	\[
	f_{\beta_1}\dots f_{\beta_p} v_M = 0,\ \beta_i\in\Phi_+, p > M. 
	\]
\end{lem}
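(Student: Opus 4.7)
The plan is to prove the lemma in two steps: first verify that the displayed $f$-monomials annihilate $v_M$, and then show these (together with the obvious $\fb$-relations $\fn v_M = 0$, $\fh v_M = 0$) generate all relations by matching spanning sets with a basis of $V_M = V^{\odot M}$. So I would introduce the universal cyclic $\msl_n^a$-module $\tilde V_M$ generated by $\tilde v_M$ subject to $\fn \tilde v_M = \fh \tilde v_M = 0$ and $f_{\beta_1} \cdots f_{\beta_p} \tilde v_M = 0$ for $p > M$, and argue that the canonical map $\tilde V_M \twoheadrightarrow V_M$ is an isomorphism.

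For the first step, the key point is that $\fn_-^a$ is abelian in $\msl_n^a$, so it acts on $V^{\odot M}$ via the symmetric coproduct: $f_{\beta_1} \cdots f_{\beta_p} v^{\odot M}$ equals the sum, over all ways of distributing the $p$ factors among the $M$ tensor slots, of the resulting symmetric tensors. By the definition of $V$ we have $f_\beta v \in V(1)$ and $\fn_-^a$ annihilates $V(1)$, so any slot that receives two or more $f$'s contributes zero. When $p > M$, every distribution places at least two $f$'s in some slot by pigeonhole, and the relation follows.

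For the second step I would do a dimension comparison. Since $\fn_-^a$ is abelian, $\U(\fn_-^a) = S(\fn_-^a)$, and the $f$-relations of degree $>M$ cut off $\tilde V_M$ to be spanned by unordered monomials $f_{\beta_1} \cdots f_{\beta_p} \tilde v_M$ with $p \le M$. On the target side, the same distribution argument gives the explicit formula
\[
f_{\beta_1} \cdots f_{\beta_p} v_M = \frac{M!}{(M-p)!}\, v^{\odot(M-p)} \odot (f_{\beta_1} v) \odot \cdots \odot (f_{\beta_p} v)
\]
for $p \le M$, and the decomposition $V^{\odot M} = \bigoplus_{p=0}^M S^{M-p}(V(0)) \T S^p(V(1))$ shows that these images (as $p$ and the multiset $\{\beta_1,\dots,\beta_p\}$ vary) are linearly independent and span $V_M$. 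Thus the surjection $\tilde V_M \twoheadrightarrow V_M$ sends a spanning set to a basis and must be an isomorphism.

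The only delicate point is the first step; once the pigeonhole/abelian argument is in place, the second step is a formal consequence of the symmetric power construction of $V_M$ and the explicit decomposition of $V = V(0) \oplus V(1)$, so I do not expect any serious obstacle beyond correctly tracking the combinatorics of distributing $p$ operators among $M$ tensor factors.
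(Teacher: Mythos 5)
Your proof is correct and follows the same route as the paper: both identify $V_M$ with $S(\fn_-^a)/I_{M+1}$. The paper's proof is a two-sentence assertion ($V_1 \simeq S(\fn_-^a)/I_2$, ``hence'' $V_M \simeq S(\fn_-^a)/I_{M+1}$); your pigeonhole argument for the vanishing and the decomposition $S^M(V) \simeq \bigoplus_{p=0}^M S^{M-p}(V(0))\T S^p(V(1))$ for the injectivity supply exactly the details that the paper leaves implicit behind that ``hence''. (The exact constant $M!/(M-p)!$ in your display depends on the normalization convention for $\odot$; all that matters is that it is nonzero for $p\le M$, so this does not affect the argument.)
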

\begin{proof}
	The module $V\simeq V_1$ is isomorphic to the quotient of the polynomial
	ring in variables $f_\beta$ by the ideal generated by all the quadratic expressions. Hence $V_M$ is isomorphic to the quotient of the 
	same polynomial ring by the ideal generated by all the expressions of degree
	$M+1$.
\end{proof}

Recall the standard scalar product on $\fh^*$ defined by $(\al_i,\omega_j)=\delta_{i,j}$.

\begin{lem}\label{lem:rel}
	The following relations hold true in $L^a_{\la,M}$: for any $r\ge 1$ and any
	$\beta_1,\dots,\beta_r\in\Phi_+$:
	\begin{equation}\label{eq:defrel}
		f_{\beta_1}^{a_1}\dots f_{\beta_r}^{a_r} \ell_{\la,M} = 0 \text{ if }
		a_1+\dots + a_r > M + \sum_{i=1}^r (\la,\beta_i).
	\end{equation}
\end{lem}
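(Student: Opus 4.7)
The plan is to reduce the displayed relation to two pieces already in hand: the vanishing relations of $V_M$ from the previous lemma, and the standard PBW-degenerate relations $f_\beta^{(\la,\beta)+1}\ell_\la=0$ in $L^a_\la$ established in \cite{F1,F2}.

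First, I would exploit that $\fn_-^a$ is abelian. Consequently $\U(\fn_-^a)$ is a commutative polynomial Hopf algebra with cocommutative coproduct $\Delta(f_\beta)=f_\beta\otimes 1+1\otimes f_\beta$, and the tensor product $L_\la^a\otimes V_M$ acquires a $\U(\fn_-^a)$-module structure via the Leibniz rule. Expanding $\prod_i\Delta(f_{\beta_i})^{a_i}$ and applying it to $\ell_\la\otimes v^{\otimes M}$ gives
\[
f_{\beta_1}^{a_1}\cdots f_{\beta_r}^{a_r}(\ell_\la\otimes v^{\otimes M})=\sum_{0\le b_i\le a_i}\prod_i\binom{a_i}{b_i}\,\bigl(f_{\beta_1}^{b_1}\cdots f_{\beta_r}^{b_r}\ell_\la\bigr)\otimes\bigl(f_{\beta_1}^{a_1-b_1}\cdots f_{\beta_r}^{a_r-b_r}v^{\otimes M}\bigr).
\]
Since $L^a_{\la,M}$ is the cyclic $\U(\fn_-^a)$-submodule of $L^a_\la\otimes V_M$ generated by $\ell_\la\otimes v^{\otimes M}$, it suffices to show every summand on the right vanishes.

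The case analysis is organized by the size $C=\sum_i(a_i-b_i)$ of the total exponent carried by the right tensor factor. If $C>M$, the right tensor factor is killed by the previous lemma. Otherwise $C\le M$, and then the hypothesis $\sum a_i>M+\sum(\la,\beta_i)$ forces $B:=\sum b_i>\sum(\la,\beta_i)$; by the pigeonhole principle some index $i$ must satisfy $b_i>(\la,\beta_i)$. Using commutativity of $\U(\fn_-^a)$ one pulls the factor $f_{\beta_i}^{b_i}$ to the right and applies $f_{\beta_i}^{(\la,\beta_i)+1}\ell_\la=0$ to see the left tensor factor vanishes. There is essentially no obstacle beyond this bookkeeping; the only point requiring care is that the pigeonhole step reduces the multi-root bound $\sum(\la,\beta_i)$ to the single-root relations $f_\beta^{(\la,\beta)+1}\ell_\la=0$ via commutativity, which is precisely the degeneracy feature of $L^a_\la$ that makes the argument go through.
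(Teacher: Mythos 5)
Your proof is correct and takes essentially the same approach as the paper: expand $f_{\beta_1}^{a_1}\cdots f_{\beta_r}^{a_r}(\ell_\la\T v^{\T M})$ by the Leibniz/coproduct rule (using that $\fn_-^a$ is abelian), then observe that each summand is killed either by $f_{\beta_i}^{(\la,\beta_i)+1}\ell_\la=0$ in $L_\la^a$ or by the degree-$(M+1)$ relations in $V_M$. The paper merely presents the dichotomy in the opposite order (first discard terms with some $b_i>(\la,\beta_i)$, then note the surviving ones have $\sum c_i>M$), while you split on $\sum c_i\lessgtr M$ and invoke pigeonhole in the complementary case; the two bookkeepings are equivalent.
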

\begin{proof}
	Recall that  for any $\beta\in \Phi_+$ the following relation holds true
	in $L_\la^a$: $f_\beta^{(\la,\beta)+1}\ell_\la =0$ (this relation holds true even 
	before passing to the associated graded space, i.e. already in $L_\la$).
	Recall that $L^a_{\la,M}=\U(\fn_-^a)(\ell_\la\T v_M)$ ($v_M$ is a cyclic vector
	of $V_M$). Since $f_{\beta_i}^{(\la,\beta_i)+1}\ell_\la =0$, the expression
	$f_{\beta_1}^{a_1}\dots f_{\beta_r}^{a_r} (\ell_\la\T v_M)$ is equal to a linear
	span of vectors of the form 
	\[
	\prod_{i=1}^r f_{\beta_i}^{b_i}\ell_\la\T \prod_{i=1}^r f_{\beta_i}^{c_i} v_M,\
	b_i+c_i=a_i, b_i\le (\la,\beta_i). 
	\] 
	Hence each summand as above vanishes, because $\sum_{i=1}^r c_i>M$. 
\end{proof}

Recall the polytopes $X_{\bbm,M}$ and the sets of integer points $S_{\bbm,M}\subset X_{\bbm,M}$. In what follows we write $X_{\la,M}$ and $S_{\la,M}$ for $\la=\sum_{i=1}^{n-1} m_i\om_i$. We also relabel the coordinates 
$x_{i,j}$, $n\ge i>j\ge 1$ by $x_{\al_{j,i-1}}$ (recall that $f_{\al_{j,i-1}}$
is equal to the matrix unit $E_{i,j}$). To $\bs=(s_\al)_{\al\in\Phi_+}\in S_{\la,M}$ we attach the vector $f^\bs\ell_{\al,M}=\prod_{\al} f_\la^{s_\al}\ell_{\la,M}$.  

\begin{thm}\label{thm:rel+bas}
	Relations \eqref{eq:defrel} are defining for the $\msl_n^a$ module $L^a_{\la,M}$. The vectors  $f^\bs\ell_{\al,M}$, $\bs\in S_{\la,M}$ form
	a basis of $L_{\la,M}$.
\end{thm}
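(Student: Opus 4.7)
Let $\widetilde L^a_{\la,M}$ denote the cyclic $\U(\fn_-^a)$-module presented by a generator $\widetilde\ell_{\la,M}$ subject to the relations \eqref{eq:defrel}. Lemma \ref{lem:rel} supplies a surjection $\pi\colon \widetilde L^a_{\la,M}\twoheadrightarrow L^a_{\la,M}$. My plan is to establish the chain
$$|S_{\la,M}|\;\ge\;\dim \widetilde L^a_{\la,M}\;\ge\;\dim L^a_{\la,M}\;\ge\;|S_{\la,M}|,$$
forcing equalities throughout; both parts of the theorem then follow, since the surjection $\pi$ must be an isomorphism and the spanning set $\{f^\bs\ell_{\la,M}:\bs\in S_{\la,M}\}$ must be linearly independent.

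For the upper bound $\dim \widetilde L^a_{\la,M}\le|S_{\la,M}|$, I would run a straightening argument: given any $\bs\notin S_{\la,M}$, the chain-collection reformulation of Lemma \ref{lem:manychains} produces a violating system $C_1,\dots,C_r$, and the corresponding instance of \eqref{eq:defrel} rewrites $f^\bs\widetilde\ell_{\la,M}$ as a combination of $f^{\bs'}\widetilde\ell_{\la,M}$ strictly smaller in a suitably chosen degree-reverse-lex order on multi-indices. Iterating and appealing to a Noetherian termination argument yields the bound.

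For the lower bound $\dim L^a_{\la,M}\ge|S_{\la,M}|$, I would exploit the realization $L^a_{\la,M}=\U(\fn_-^a)(\ell_\la\T v_M)\subset L^a_\la\T V_M$ (the primitive coproduct action), together with the FFLV basis $\{f^{\bs_1}\ell_\la:\bs_1\in S_{\la,0}\}$ of $L^a_\la$ from \cite{FFL1} and the basis $\{f^{\bs_2}v_M:\bs_2\in S_{0,M}\}$ of $V_M$ from the preceding lemma. Expanding via the primitive comultiplication gives
$$f^\bs(\ell_\la\T v_M)\;=\;\sum_{\bs_1+\bs_2=\bs}\binom{\bs}{\bs_1}\,f^{\bs_1}\ell_\la\T f^{\bs_2}v_M,$$
and Theorem \ref{thm:MS} guarantees that each $\bs\in S_{\la,M}$ admits a decomposition $\bs=\bs_1+\bs_2$ with $\bs_1\in S_{\la,0}$, $\bs_2\in S_{0,M}$. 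Selecting for each $\bs$ a canonical such decomposition (compatible with a fixed term order on multi-indices) pins down a "leading" basis vector of $L^a_\la\T V_M$ whose coefficient in a relation $\sum_\bs c_\bs f^\bs(\ell_\la\T v_M)=0$ can only be supplied by $c_\bs$ itself, reducing the independence to a triangular system.

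The main obstacle is the linear-independence step: the nontrivial straightening of $f^{\bs_1}\ell_\la$ for $\bs_1\notin S_{\la,0}$ means that each tensor $f^{\bs'}(\ell_\la\T v_M)$ can contribute to many basis vectors. The argument must therefore select canonical decompositions in a manner truly compatible with a monomial order (for instance, a greedy/lex-extremal choice), and verify that for each $\bs\in S_{\la,M}$ no $\bs'\ne\bs$ in $S_{\la,M}$ produces the same designated leading vector. Once this triangular structure is secured, the three dimension estimates collapse, $\pi$ is forced to be an isomorphism, and both assertions of the theorem are obtained simultaneously.
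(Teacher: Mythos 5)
Your overall skeleton — building the chain $|S_{\la,M}|\ge\dim\widetilde L^a_{\la,M}\ge\dim L^a_{\la,M}\ge|S_{\la,M}|$ and forcing equalities — matches the paper, and your upper-bound/spanning step (straightening via the chain-collection reformulation of Lemma~\ref{lem:manychains} and the relations from Lemma~\ref{lem:rel}) is exactly the argument the paper invokes (by reference to Proposition~2.18 of \cite{F5}).

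The linear-independence step, however, is left with a genuine gap, which you yourself flag. The expansion
$f^\bs(\ell_\la\T v_M)=\sum_{\bs_1+\bs_2=\bs}\binom{\bs}{\bs_1}f^{\bs_1}\ell_\la\T f^{\bs_2}v_M$
includes many decompositions with $\bs_1\notin S_{\la,0}$, and after straightening those factors you must control which FFLV basis vectors appear and with what coefficients; simply ``selecting a canonical decomposition compatible with a term order'' is precisely the hard point, and the proposal does not specify an order nor verify the required triangularity. The paper closes this gap by invoking the \emph{favourable modules / essential bases} framework of \cite{FFL2} (together with Theorem~\ref{thm:MS}): there one fixes a homogeneous monomial order on $\bZ_{\ge 0}^P$, defines a monomial $f^\bs$ to be \emph{essential} for a cyclic module if $f^\bs$ applied to the cyclic vector does not lie in the span of the images of larger monomials, proves that the set of essential monomials is closed under Minkowski addition when passing to Cartan components, and checks that the essential monomials of $L^a_\la$ (resp.\ $V_M$) are precisely $S_{\la,0}$ (resp.\ $S_{0,M}$). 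Your ``canonical leading decomposition'' then exists automatically and the triangularity you need is exactly the defining property of essential monomials. To complete your proof you should either cite \cite{FFL2} at this point, as the paper does, or reproduce that essential-bases argument explicitly — choosing the homogeneous order, proving that essential monomials add under $\odot$, and identifying the essential monomials for the two factors. As written, the lower bound is asserted but not proved.
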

\begin{proof}
	The proof is analogous to the proof of Theorem 2.19, \cite{F5}.
	Linear independence of vectors   $f^\bs\ell_{\al,M}$, $\bs\in S_{\la,M}$
	is implied by Theorem \ref{thm:MS} and \cite{FFL2}. The spanning property
	is proved in the following way. We need to show that if $\bt\notin S_{\la,M}$, 
	then $f^\bt\ell_{\al,M}$ can be expressed as a linear combination of vectors
	of the form $f^\bs\ell_{\al,M}$, $\bs\in S_{\la,M}$. This is achieved as in the
	proof of Proposition 2.18, \cite{F5} using Lemma \ref{lem:rel} and Lemma \ref{lem:manychains}. 
\end{proof}

\section{Geometry}\label{sec:geom}
Let $L$ be an $n$-dimensional vector space with a fixed basis $\ell_1,\dots,\ell_n$.
All the Grassmannians $\Gr(d,n)$ considered below are assumed to consist of subspaces 
of $L$ (of dimension $d$).
We denote by $\mathrm{pr}_i: L\to L$ the projection along the $i$-th basis vector, i.e.
$\mathrm{pr}_i \ell_j= \delta_{i,j}\ell_j$. In what follows we denote by $E_{i,j}\in \mgl(L)\simeq \mgl_n$ the matrix units with respect to the basis $\ell_i$.

\subsection{Graph closures}
We recall the setup of \cite{BSS,F5}. Let $\imath:\bA^{d(n-d)}\to\Gr(d,n)$ be the 
standard parametrization of the open affine cell in the Grassmannain.
Recall that this open cell consists of the subspaces being the column spans of the following
matrices
\[
\begin{pmatrix}
	1 & 0 &  \ldots &  0\\
	0 & 1 &  \ldots &  0\\
	\ldots & \ldots & \ldots & \ldots\\
	0 & 0 &  \ldots & 1\\
	z_{d+1,1} & z_{d+1,2} &  \ldots & z_{d+1,d}\\
	z_{d+2,1} & z_{d+2,2} &  \ldots & z_{d+2,d}\\
	\ldots & \ldots & \ldots & \ldots\\
	z_{n,1} & z_{n,2} &  \ldots & z_{n,d} 	
\end{pmatrix}.	
\]
The map $\imath$ extends to the birational map $\bP^{d(n-d)}\to\Gr(d,n)$
(both the projective space and the Grassmannian are compactifications of the affine space
$\bA^{d(n-d)})$ and we consider the closure of the graph of this birational map.
More precisely, we consider the closure $\G(d,n)\subset \bP^{d(n-d)}\times\Gr(d,n)$
of the points of the form $(x,\imath(x))$, $x\in\bA^{d(n-d)}$. 
The variety $\G(d,n)$
has two natural projections $\varphi:\G(d,n)\to\Gr(d,n)$ and $\psi:\G(d,n)\to \bP^{d(n-d)}$.

Let $\fp_d\subset\msl_n$ be the standard $d$-th maximal parabolic subalgebra 
and let $\fr_d$ be the corresponding abelian radical (hence $\msl_n=\fp_d\oplus \fr_d$).
The group $\bG_a^{d(n-d)}=\exp(\fr_d)$ acts on $\G(d,n)$ with an open orbit, the action
extends the natural action on $\Gr(d,n)$. More precisely, we write $\bP^{d(n-d)}$ as 
$\bP(\bC v\oplus \fr_d)$ for an auxiliary vector $v$ . Then the space $\bC v\oplus \fr_d$ can be seen as a $\fr_d$ module
with the trivial action on $\fr_d$ and with the obvious action $\bC\T \fr_d \to\fr_d$.
The abelian unipotent group $\exp(\fr_d)$ acts on both
$\bP^{d(n-d)}$ and $\Gr(d,n)$. 

Let us identify the radical $\fr_d$ with the space of homomorphisms $\mathrm{Hom}(L^-_d,L^+_d)$, where
\[
L^-_d=\mathrm{span}\{\ell_1,\dots,\ell_d\},\quad L^+_d=\{\ell_{d+1},\dots,\ell_n\}.
\]
Let $\partial\G(d,n)\subset \G(d,n)$ be the complement to the open dense cell  
$\G^o(d,n)$ (open dense $\bG_a^{d(n-d)}$ orbit). The boundary $\partial\G(d,n)$ belongs to 
$\bP(\fr_d)\times\Gr(d,n)\subset \bP^{d(n-d)}\times\Gr(d,n)$ and  
consists of pairs $([f],U)$ such that
\begin{equation}
	\ker(f)\supset pr_{[d+1,n]}(U),\qquad \mathrm{Im}(f)\subset U,	
\end{equation}
where $pr_{[d+1,n]}:L\to L^-_d$ is the projection along $L_d^+$ and $[f]$ is the line (an element of the 
projective space) containing $f$. 
We conclude that
\begin{gather*}
	\varphi^{-1}(U)\simeq \bP(\mathrm{Hom}\left(L_d^-/pr_{[d+1,n]}(U),U\cap L_d^+\right),\\
	\psi^{-1}([f])\simeq \Gr(d-\dim\mathrm{Im}(f),\ker(f)\oplus L_d^+).
\end{gather*}

\begin{rem}
Note that 	$pr_{[d+1,n]}=pr_{d+1}\dots pr_n$.
\end{rem}

\subsection{Degenerate flag varieties}
Recall that $\Gr(d,n)$ is the parabolic flag variety corresponding to the maximal parabolic subgroup. We extend the construction of the graph closure
to the case of PBW degenerate flag varieties. 
The reason we consider the degenerate flags instead of the classical ones is that in 
the latter case there is no group acting on the graph closure with an open orbit, 
while in the former case the abelian unipotent group still does the job (even the whole degenerate group $SL_n^a$ is available).
Let us recall the main definitions.

Let $\cF_n\subset \prod_{d=1}^{n-1} \Gr(d,n)$ be the flag variety for the group $SL_n$  consisting of collections 
$(U_d)_{d=1}^{n-1}$ such that $U_d\subset U_{d+1}$. One has
$\dim \cF_n = n(n-1)/2$; in what follows we denote this dimension by $N$,
we note that $N=\dim\fn$.
The flag varieties are acted upon by the group $SL_n$ 
and  $\cF_n$ is identified with the quotient $SL_n/B$ for the Borel subgroup
$B$.

Let $\cF_n^a$ be the PBW degenerate flag variety. By definition, this variety also sits
inside the product of Grassmann varieties $\Gr(d,n)$, $1\le d\le n-1$ and consists
of collections of subspaces $(U_d)_{d=1}^{n-1}$ of the ambient space $L$ such that
\[
\pr_{d+1} U_d\subset U_{d+1} \text{ for all } d=1,\dots,n-2. 
\]   
The varieties  $\cF_n^a$ are flat degenerations of the classical flag varieties  $\cF_n$; in particular, $\dim \cF_n^a =  N$.
The variety  $\cF_n^a$ admits an action of the  abelian unipotent group $\bG_a^{N}$. 
The group $\bG_a^{N}$ acts on $\cF_n^a$ with an open dense orbit 
isomorphic to the affine cell (we note that $\cF_n^a$ is irreducible).
However, there is a larger group acting on the degenerate flag varieties.  

Namely, recall the degenerate Lie algebra $\msl_n^a$ being 
the direct sum of its abelian ideal $\fn_-^a$ and its Lie subalgebra $\fb$.
Let us denote by $SL_n^a$ the following Lie group of the Lie algebra $\msl_n^a$:
the 
group $SL_n^a$ is the semi-direct product of its normal abelian subgroup $\bG_a^{N}=\exp(\fn^a_-)$ and 
the Borel subgroup $B$ (the defining homomorphism 
$B\to \mathrm{Aut}(\bG_a^N)$ is defined by the action of $\fb$ on $\fn^a_-$).

\subsection{The graph closure}
Let us generalize the definition of $\G(d,n)$ to the flag varieties case.
Let $V$ be a one-dimensional extension of the space $\fn_-^a$,
i.e.  $V=V(0)\oplus V(1)$, where $V(0)$ is a one-dimensional vector space spanned by a vector $v$ and $V(1)\simeq \fn^a_-$ (hence $\dim V=N+1$).
Then $V$ admits a natural structure of $\fn^a_-$ module with the trivial action on $V(1)$ and an obvious action $V(0)\T \fn^a_-\to V(1)$. 
We note that $V$ is cyclic $\fn^a_-$ module (with cyclic vector $v$) and 
$V$ is isomorphic to the quotient of the symmetric algebra $S(\fn^a_-)$
by the ideal generated by all quadratic expressions.
For a positive integer $M$ we also consider $V_M=S(\fn^a_-)/I_{M+1}$, where
the ideal $I_{M+1}$ is generated by all degree $M+1$ polynomials.
In particular, $V=V_1$.

Recall that the degenerate flag variety admits an open cell which is 
an orbit of the abelian unipotent group $\bG_a^N=\exp(\fn^a_-)$; this cell is naturally identified with the vector space $\bA^N\simeq \fn^a_-$.
Let 
$\imath: \bA^{N}\subset \Fl^a_n$ be the corresponding parametrization.
The affine space $\bA^N\simeq \fn^a_-$ has a natural compactification $\bP(V)$ (since
$V$ is a one-dimensional extension of $\fn^a_-$).  
We define 
\[
\G(n) = \overline{\{(x,\imath(x)),\ x\in \bA^{N}\}},\qquad 
\G(n)\subset \bP(V)\times \Fl^a_n.
\]
Let $\varphi:\G(n)\to\Fl^a_n$ be the projection map.

\begin{lem}
	The variety $\G(n)$ admits an action of the group $SL_n^a$.
	The unipotent additive subgroup $\bG_a^N\subset SL_n^a$ acts on $\G(n)$ with an open dense orbit. 	
\end{lem}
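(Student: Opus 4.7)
The plan is to promote the $SL_n^a$-action on the ambient product $\bP(V)\times \Fl_n^a$ (built from the $\msl_n^a$-module structures on $V$ and on $\Fl_n^a$) to an action on $\G(n)$, by showing that the graph
\[
G_0 \;:=\; \{\,([v+w],\,\imath(w)) : w\in\fn_-^a\,\} \;\subset\; \bP(V)\times \Fl_n^a
\]
is already stable under it. Then $\G(n)=\overline{G_0}$ is stable by continuity, and once we see that $G_0$ is a free $\bG_a^N$-orbit open in $\G(n)$, the orbit statement follows.

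First I would use the structure of $V=V(0)\oplus V(1)$ to exhibit the $SL_n^a$-action on $\bP(V)$ and verify that the chart $\{[v+w]:w\in V(1)\}\simeq \bA^N$ is $SL_n^a$-stable. Since $\fb$ annihilates $v\in V(0)$ and preserves $V(1)$, both summands are $B$-submodules, so $B$ preserves the chart and acts on it linearly through $V(1)\simeq\fn_-^a$. On the other hand, for $y\in\fn_-^a$ one has $y\cdot v=y\in V(1)$ and $y\cdot V(1)=0$, so $\exp(y)\cdot(v+w)=v+(y+w)$; hence $\bG_a^N$ preserves the chart and acts there by translation. The resulting affine $SL_n^a=\bG_a^N\rtimes B$ action on $\fn_-^a$ reads $\exp(y)b\cdot w = y+b\cdot w$.

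Next I would verify that $\imath:\bA^N\to \Fl_n^a$ is equivariant for this affine action. Writing $\imath(w)=\exp(w)\cdot F_0$ for the (standard, $B$-fixed) base flag $F_0$, and using that the semidirect product $SL_n^a=\bG_a^N\rtimes B$ is defined precisely so that $b\exp(w)b^{-1}=\exp(b\cdot w)$ with the same $B$-action on $\fn_-^a$ as on $V(1)$, a direct computation gives
\[
\exp(y)b\cdot \imath(w)\;=\;\exp(y)\,b\exp(w)\cdot F_0\;=\;\exp\bigl(y+b\cdot w\bigr)\cdot F_0\;=\;\imath\bigl(\exp(y)b\cdot w\bigr),
\]
using that $\fn_-^a$ is abelian. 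Together with the corresponding formula on $\bP(V)$ computed above, this shows that $G_0$ is $SL_n^a$-stable, whence so is the Zariski closure $\G(n)$.

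Finally, the map $w\mapsto ([v+w],\imath(w))$ is a closed embedding of $\bA^N$ into the open subset $\bA^N\times \Fl_n^a\subset \bP(V)\times \Fl_n^a$ (it is the graph of a morphism), so $G_0$ is locally closed in the ambient space and hence open in its closure $\G(n)$. Because $\bG_a^N$ acts simply transitively on $G_0$ by translations, $G_0$ is the desired open dense $\bG_a^N$-orbit. The only delicate point is matching the three ways the action appears — as an $\msl_n^a$-module on $V$, as an abstract group action on $\Fl_n^a$, and as the affine action on the open cell; once the identification $V(1)\simeq \fn_-^a\simeq \fg/\fb$ as $B$-modules is pinned down, the remaining verifications are formal.
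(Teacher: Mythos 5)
Your proof is correct and follows essentially the same route as the paper's: endow $V$ with the $SL_n^a$-module structure ($B$ trivial on $V(0)$, adjoint on $V(1)\simeq\fg/\fb$, $\fn_-^a$ acting from $V(0)$ to $V(1)$), observe that the diagonal $G_0$ is a $\bG_a^N$-orbit through the $B$-fixed base point, and conclude stability of the closure. The paper's proof is terser and leaves the equivariance of $\imath$ and the openness of $G_0$ implicit, but the underlying argument is the same.
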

\begin{proof}
	We note that $V$ has a natural structure of $SL^a_n$ module. More precisely,
	the action of the subgroup $\bG_a^N$ comes from the $\fn_-^a$ action and $B$ acts trivially on $V(0)$ and via the adjoint action on $V(1)\simeq \msl_n/\fb$. The diagonal part  $\{(x,\imath(x)),\ x\in \fn_-^a\}$ is by definition  an $\exp(\fn_-^a)$ orbit, which implies the claim of the lemma.  
\end{proof}

\begin{rem}
	Recall that the degenerate flag variety $\Fl^a_n$ is realized inside the product
	of Grassmannians $\Gr(d,n)$. However, the graph closure $\G(n)$ does not
	sit inside the graph closures
	$\G(d,n)$, $d=1,\dots,n-1$. The reason is as follows. Recall that 
	$\G(d,n)$ sits inside $\bP^{d(n-d)}\times \Gr(d,n)$, where $\bP^{d(n-d)}$ is the projective space of the radical of the $d$-th maximal parabolic subalgebra extended by a one-dimensional vector space.
	By definition, $\G(n)$ belongs to $\bP^N\times \Fl_n^a$. However, 
	the $\bG^n_a$ orbit passing through the product of cyclic lines of
	$\bP^{d(n-d)}$ is not isomorphic to $\bP^N$ 
	(already for $n=3$).
\end{rem}

Let $\partial\G(n)$ be the boundary of the graph closure, i.e. the complement to the
open cell $\G^o(n)=\{(x,\imath(x)),\ x\in \bA^{N}\}$. Our goal is 
to derive an explicit description of $\partial\G(n)$.
We start with the following simple observation.

\begin{lem}
	The boundary $\partial\G(n)$ belongs to $\bP(V(1))\times \Fl^a_n$.
\end{lem}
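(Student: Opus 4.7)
The plan is to exploit the fact that $\G^o(n)$ is already the graph of a regular morphism. Recall that the embedding $\bA^N \hookrightarrow \bP(V)$ is realized by sending $x \in \fn^a_-$ to the line $[v+x]$, where $V = V(0)\oplus V(1)$ with $V(0)=\bC v$ and $V(1)\simeq \fn^a_-$. Under this identification the affine open cell $\bA^N$ coincides with the principal open subset $\bP(V)\setminus \bP(V(1))$, so that $\bP(V)\setminus \bA^N = \bP(V(1))$.

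First I would observe that $\imath: \bA^N \to \Fl^a_n$ is a regular morphism into a separated variety. Consequently, its graph
\[
\G^o(n)=\{(x,\imath(x)) : x\in \bA^N\} \subset \bA^N\times \Fl^a_n
\]
is Zariski-closed in $\bA^N\times \Fl^a_n$ (as the preimage of the diagonal of $\Fl^a_n\times \Fl^a_n$ under the morphism $(x,U)\mapsto(\imath(x),U)$).

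Second, since $\bA^N$ is open in $\bP(V)$, the subset $\bA^N\times \Fl^a_n$ is open in $\bP(V)\times \Fl^a_n$. Because $\G(n)$ is by definition the closure of $\G^o(n)$ inside $\bP(V)\times \Fl^a_n$ and the intersection of a closure with an open set equals the closure taken inside that open set, the previous step gives
\[
\G(n)\cap \bigl(\bA^N\times \Fl^a_n\bigr) = \G^o(n).
\]

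Finally, subtracting $\G^o(n)$ from $\G(n)$ yields
\[
\partial\G(n) = \G(n)\setminus \G^o(n) \subset \bigl(\bP(V)\times \Fl^a_n\bigr)\setminus \bigl(\bA^N\times \Fl^a_n\bigr) = \bP(V(1))\times \Fl^a_n,
\]
which is the desired conclusion. There is no real obstacle here; the only subtlety is keeping straight the identification of $\bA^N$ with the complement of $\bP(V(1))$ inside $\bP(V)$, and the rest is a general fact about graphs of morphisms to separated varieties.
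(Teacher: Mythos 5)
Your proof is correct and follows essentially the same approach as the paper's, which simply notes the decomposition $\bP(V) = \bA^N \sqcup \bP(V(1))$ coming from the $\fn_-^a$-module structure of $V$ and leaves the graph-closure reasoning implicit. You spell out the missing step cleanly: since $\Fl_n^a$ is separated, the graph $\G^o(n)$ is closed in the open set $\bA^N\times\Fl_n^a$, so taking the closure in $\bP(V)\times\Fl_n^a$ adds nothing over $\bA^N$, forcing the boundary to project into $\bP(V(1))$.
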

\begin{proof}
	It suffices to recall that $V\simeq V(0)\oplus V(1)$ with $V(1)\simeq \fn_-^a$ and 
	the Lie algebra $\fn^a_-$ acts from the one-dimensional $V(0)$ to $V(1)$.
\end{proof}

Let ${\bf J}=(J_1,\dots,J_{n-1})$ be a collection of subsets of the set $[n]$ such
that $\# J_d=d$ for all $d$ and  
\[
J_d\subset J_{d+1}\cup\{d+1\},\quad d=1,\dots,n-2.
\]
In what follows we call such collections admissible.
Recall that admissible collections ${\bf J}$ label the points of the degenerate 
flag variety  with each component being a coordinate subspace, i.e.
the torus fixed points of $\Fl^a_n$. In particular, the number
of such collections is equal to the normalized median Genocchi number \cite{F1,F2}.
Let $U_{\bf J}\in \Fl^a_n$ be the corresponding point, i.e. 
$U_{{\bf J},d}=\mathrm{span}\{\ell_i, i\in J_d\}$.

For $d\in [n-1]$ recall the radical $\fr_d$ of the $d$-th maximal
parabolic subalgebra (i.e $\fr_d$ is spanned by matrix units $E_{i,j}$
with $i>d$ and $j\le d$). These radicals are abelian and admit natural emebeddings into $\fn_-^a$. We also have projections $\kappa_d: \fn_{-}^a\to \fr_d$
along matrix units that are not in $\fr_d$. 
For an element $f\in\fn_-^a$ let $f_d=\kappa_d(f)$.
In words, we take a lower triangular matrix and keep only the entries
located in the first $d$ columns and the last $n-d$ rows.

Recall the projection $\varphi: \G(n)\to \Fl^a_n$.
\begin{prop}\label{prop:fiberJ}
	The fiber $\varphi^{-1}(U_{\bf J})$ is a projectivized subspace of 
	$\fn_-^a$ spanned by matrix units $E_{i,j}$ such that for any $d\in [n-1]$
	the following condition holds:
	\[
	\text{ if }\quad j\le d <i,\quad \text{ then }\quad i\in J_d, j\notin J_d.   
	\] 
\end{prop}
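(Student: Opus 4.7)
My plan is to prove the two inclusions separately. Let $W_\bJ \subset \fn_-^a$ denote the linear span of the matrix units $E_{i,j}$ listed in the statement; the claim is $\varphi^{-1}(U_\bJ) = \bP(W_\bJ)$. The containment $\subseteq$ will follow by reducing to the Grassmannian fiber description already recorded in the preceding subsection, while $\supseteq$ requires explicit curves in the open cell.

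For $\subseteq$, I use the projections $\pi_d : \Fl_n^a \to \Gr(d,n)$ together with the rational extension $\kappa_d : \bP^N \dashrightarrow \bP^{d(n-d)}$ of the linear projection $\fn_-^a \to \fr_d$. The identity $\pi_d \circ \imath = \imath_d \circ \kappa_d$ on the open cell promotes $(\kappa_d, \pi_d)$ to a rational map $\bP^N \times \Fl_n^a \dashrightarrow \bP^{d(n-d)} \times \Gr(d,n)$ carrying $\G^o(n)$ into $\G^o(d,n)$; by closedness this extends to a rational map $\G(n) \dashrightarrow \G(d,n)$ regular away from $\bP(\ker \kappa_d)$. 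For $([f], U_\bJ) \in \G(n)$ with $\kappa_d(f) \neq 0$, the image $([\kappa_d(f)], (U_\bJ)_d)$ must lie in $\G(d,n)$, and the Grassmannian fiber formula $\bP\bigl(\Hom(L_d^-/\pr_{[d+1,n]}(U_\bJ)_d, (U_\bJ)_d \cap L_d^+)\bigr)$ then forces $\kappa_d(f)$ to be a combination of $E_{i,j}$ with $j \le d < i$, $i \in J_d$, $j \notin J_d$. Intersecting these constraints over $d \in [n-1]$ gives $[f] \in \bP(W_\bJ)$.

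For $\supseteq$, I will show that each $f \in W_\bJ \setminus \{0\}$ is the projective limit of a path $f(t) \in \fn_-^a$ with $\imath(f(t)) \to U_\bJ$. The natural ansatz is
\[
f(t) = t^K f + \sum_{(d,j):\, j \in [d] \setminus J_d} t^{e(d,j)} E_{i(d,j),\, j},
\]
where $K$ is large, each auxiliary index $i(d,j) \in J_d \cap (d, n]$ (nonempty by admissibility of $\bJ$, since $|J_d \cap (d,n]| = d - |J_d \cap [d]|$ which is positive whenever column $j \in [d] \setminus J_d$ exists), and the exponents $e(d,j) < K$ are chosen decreasing in lexicographic order on $(d,j)$. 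The defining condition on $W_\bJ$ ensures $f_{i,j} = 0$ for $j \in J_d \cap [d]$ and $i > d$, so the column $j$ of $\binom{I_d}{\kappa_d(f(t))}$ converges to $\ell_j$ for such $j$, while for $j \in [d] \setminus J_d$ the dominant entry is $t^{e(d,j)} \ell_{i(d,j)}$ with $i(d,j) \in J_d$. A direct Pl\"ucker computation then confirms $\imath(f(t))_d \to (U_\bJ)_d$ for every $d$, and the $t^K$ dominance gives $[f(t)] \to [f]$ in $\bP(V)$.

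The main technical obstacle is coordinating the exponents across the different Grassmannian components: a corrector $E_{i(d,j),\, j}$ inserted to fix column $j$ in component $d$ also appears in column $j$ of every $d'$ with $j \le d' < i(d,j)$, where it must remain subdominant. This can be arranged by letting the $e(d,j)$ fall off rapidly in the chosen order, using the defining condition on $W_\bJ$ to prevent any conflict with the $t^K f$-contributions in columns that are already controlled. Once the family is pinned down, the remaining verification reduces to standard column span / Pl\"ucker calculations inside each $\Gr(d,n)$.
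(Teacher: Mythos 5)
Your $\subseteq$ inclusion is correct and, modulo packaging, the same as the paper's: both reduce to the Grassmannian result of \cite{F5}. You push points through rational maps $\G(n)\dashrightarrow\G(d,n)$ and intersect the resulting constraints over $d$; the paper argues instead with defining ideals, noting that the Pl\"ucker-type relations cutting out $\G(d,n)$ are among those cutting out $\G(n)$. Same content, same dependency.

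Where you genuinely diverge -- and where the gap sits -- is $\supseteq$. The paper does \emph{not} try to reach every $f$ in the asserted span by an explicit curve: it uses the naive $p(t)=([v+tf],\imath(tf))$ only for $f$ satisfying the open isomorphism hypothesis \eqref{eq:fd}, shows $p(t)\to([f],U_{\bf J})$ for such $f$, and then invokes closedness of $\varphi^{-1}(U_{\bf J})\subset\bP(\fn_-^a)$ to pass from this dense subfamily to the whole span. You instead attempt an explicit curve $f(t)=t^Kf+\sum t^{e(d,j)}E_{i(d,j),j}$ for \emph{every} $f$. You correctly flag the obstacle -- a corrector $E_{i(d,j),j}$ leaks into $\kappa_{d'}(f(t))$ for every $d'$ with $j\le d'<i(d,j)$ and can swing the dominant Pl\"ucker minor of $\imath(f(t))_{d'}$ -- but then only assert that a lexicographic fall-off of the $e(d,j)$ resolves all conflicts; you never pin down the exponents nor run the Pl\"ucker computation you invoke, and that coordination is precisely the technical heart of the route you chose. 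Two remarks. First, you should combine your corrector idea with the closedness trick: it suffices to hit a dense open subset of the span, which drastically cuts the bookkeeping. Second, some corrector mechanism \emph{is} needed in general, because the paper's isomorphism locus can in fact be empty on the span: for $n=3$, ${\bf J}=(\{2\},\{2,3\})$ the span is $\bC E_{2,1}$, yet the $d=2$ map $\ell_1\mapsto f_{3,1}\ell_3$ vanishes identically on it; nevertheless $\varphi^{-1}(U_{\bf J})=\{[E_{2,1}]\}$, reached e.g.\ via $f(t)=t^2E_{2,1}+tE_{3,1}$, whose corrector $E_{3,1}$ lies \emph{outside} the span. So the idea is sound and, in corner cases, genuinely necessary -- but it must be carried out, not merely proposed.
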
	
\begin{proof}
	We first show that if $([f],U_{\bf J})\in \G(n)$ and $f=\sum_{i>j}f_{i,j}E_{i,j}$, then
	$f_{i,j}=0$ unless for any $d$ such that $j\le d <i$ one has $i\in J_d, j\notin J_d$.
	Recall that in \cite{F5}, Proposition 3.11 a similar statement was proved in the Grassmann case. More precisely, it was shown that the Pl\"ucker type relations
	cutting the graph closure for the Grassmannians inside the product of two
	projective spaces $\bP^{d(n-d)}\times \bP(\Lambda^d(L))$ imply that if a point 
	$([f_d],U_{J_d})$ belongs to $\G(d,n)$, then $(f_d)_{i,j}=0$ (with $j\le d <i$) unless
	$i\in J_d, j\notin J_d$. We have a natural embedding
	\[
	\partial \G(n)\subset \bP(\fn_-^a)\times\prod_{d=1}^{n-1} \bP(\Lambda^d(L)) 
	\]
	and the image is cut out by certain ideal in the polynomial ring in variables $Y_{i,j}$,
	$n\ge i>j\ge 1$ (responsible for the projective space $\bP(\fn_-^a)$)
	and $X_{i_1,\dots,i_d}$ with $d\in[n-1]$ and $1\le i_1<\dots <i_d\le n$
(responsible for the Grassmannian).	
	Since all the relations coming from the Grassmann case are present in our situation 
	as well, we conclude that $f_{i,j}$ vanishes unless the conditions above hold true.
	
	Now assume that we are given an element $f\in\fn_-^a$ such that
$f_{i,j}=0$ unless $(i\in J_d, j\notin J_d)$ for any $j\le d <i$.
	Let us consider the point
	\[
	p(t)= \left([v+tf],\prod_{d=1}^{n-1} \mathrm{span}\{\ell_a, a\in J_d\cap [d]\}\oplus 
	\mathrm{span}\{\ell_a + tf_d\ell_a, a\in [d]\setminus J_d\} \right).
	\]
By definition, $p(t)\in\G^o(n)$ -- the open cell of $\G(n)$. 	
	Assume that for any $d$ the operator 
	\begin{equation}\label{eq:fd}
		\mathrm{span}\{\ell_a, a\in [d]\setminus J_d\}\to \mathrm{span}\{\ell_a, a\in J_d\cap [d+1,n]\}
	\end{equation}
obtained by taking the corresponding entries of $f$
	is an isomorphism (note that the dimensions of the left and right hand sides do coincide). Then 
	$$\lim_{t\to\infty} \mathrm{span}\{\ell_a, a\in J_d\cap [d]\}\oplus 
	\mathrm{span}\{\ell_a + tf_d\ell_a, a\in [d]\setminus J_d\} = U_{J_d};$$
	since $\lim_{t\to\infty} [v+tf] = [f]$ we conclude that $\lim_{t\to\infty} p(t)=([f],U_{\bf J})$. Now the conditions that \eqref{eq:fd} are isomorphisms 
	cut out an open subset in $\fn_-^a$. However, the preimage $\varphi^{-1}(U_{\bf J})$
	is identified with a closed subvariety in $\bP(\fn^a_-)$. Hence 
	$\varphi^{-1}(U_{\bf J})$ is the desired projectivized subspace of $\fn_-^a$.   
\end{proof}

\begin{cor}\label{cor:ij}
The preimage $\varphi^{-1}(U_{\bf J})$ is the projectivization of the span of $E_{i,j}$ such that $i\in J_j$, $j\notin J_{i-1}$.
\end{cor}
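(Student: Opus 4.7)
The plan is to deduce this from Proposition \ref{prop:fiberJ} by showing that the condition appearing there, namely
\[
i\in J_d \text{ and } j\notin J_d \quad \text{for every } d \text{ with } j\le d < i,
\]
is equivalent to the much simpler pair of conditions $i\in J_j$ and $j\notin J_{i-1}$. The forward direction is immediate: specializing $d=j$ in the condition from Proposition \ref{prop:fiberJ} gives $i\in J_j$, and specializing $d=i-1$ gives $j\notin J_{i-1}$.

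For the reverse direction, the key tool is the admissibility condition $J_d\subset J_{d+1}\cup\{d+1\}$. To propagate $i\in J_j$ upward to $i\in J_d$ for all $d$ with $j\le d<i$, I would use forward induction on $d$: assuming $i\in J_d$ with $d<i-1$, admissibility gives $i\in J_{d+1}\cup\{d+1\}$; since $d+1\le i-1<i$, the element $d+1$ is strictly less than $i$, so $i\ne d+1$, forcing $i\in J_{d+1}$. This yields $i\in J_d$ for $j\le d\le i-1$ as required.

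To propagate $j\notin J_{i-1}$ downward to $j\notin J_d$ for all such $d$, I would use backward induction on $d$. The contrapositive of admissibility reads: if $j\notin J_{d+1}\cup\{d+1\}$ then $j\notin J_d$. Assuming $j\notin J_{d+1}$ for some $d\ge j$, the condition $j\le d<d+1$ gives $j\ne d+1$, so $j\notin J_{d+1}\cup\{d+1\}$, hence $j\notin J_d$. Iterating from $d=i-1$ down to $d=j$ completes the equivalence, and the corollary follows. No step here should present any real obstacle; the content is purely a combinatorial simplification of the condition in Proposition \ref{prop:fiberJ} using the defining admissibility property of $\bf J$.
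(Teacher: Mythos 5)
Your proof is correct and follows essentially the same route as the paper: the paper's own argument also reduces to showing that $i\in J_j$ and $j\notin J_{i-1}$ propagate (via admissibility $J_d\subset J_{d+1}\cup\{d+1\}$) to $i\in J_d$, $j\notin J_d$ for all $j\le d<i$, and then invokes Proposition~\ref{prop:fiberJ}. You have merely spelled out the two inductions (and the trivial converse by specialization) that the paper leaves implicit.
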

\begin{proof}
First, $i\in J_j$, $i>j$ implies $i\in J_d$ for all $d$ such that $j\le d <i$
(recall that $J_d \subset J_{d+1}\cup\{d+1\}$). Second, $j\notin J_{i-1}$, $i>j$
implies $i\notin J_d$ for  	all $d$ such that $j\le d <i$. Now Proposition
\ref{prop:fiberJ} provides the desired claim.
\end{proof}

Recall that for a matrix $f\in\fn^a_-$ the operator $f_d: L_d^-\to L_d^+$ is obtained
by keeping only the entries of $f$ located in the first $d$ columns and last $n-d$ rows. 

\begin{thm}\label{thm:maingeom}
	For $U=(U_d)_d\in\partial \Fl_n^a$ the fiber $\varphi^{-1}(U)$ consists of pairs
	$([f],U)$ such that for any $d\in [n-1]$ one has
	\[
	\mathrm{Im}(f_d)\subset U_d,\quad \ker(f_d)\supset pr_{[d+1,n]} U_d. 
	\]
\end{thm}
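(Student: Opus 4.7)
The plan is to prove the equality $\varphi^{-1}(U)=Z_U$, where $Z_U$ denotes the set of $[f]\in\bP(\fn_-^a)$ satisfying both stated conditions for every $d\in[n-1]$, by establishing the two inclusions. The forward containment will come from Pl\"ucker-type relations lifted from the Grassmannian case, while the reverse containment will come from an explicit multi-scale curve construction in the open cell $\G^o(n)$.

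For $\varphi^{-1}(U)\subseteq Z_U$, use the embedding $\G(n)\hookrightarrow \bP(V)\times\prod_{d=1}^{n-1}\bP(\Lambda^d L)$ via the Pl\"ucker embeddings on each Grassmannian factor. On the open cell $\G^o(n)$, parametrized by $h\in\fn_-^a$, the point $(v+h,\imath(h))$ projects to $(v_d+\kappa_d(h),\imath(h)_d)$, which coincides with $(v_d+\kappa_d(h),\imath_d(\kappa_d(h)))$ where $\imath_d$ is the standard chart on $\Gr(d,n)$; in particular it lies in the open cell of $\G(d,n)\subset\bP(\bC v_d\oplus \fr_d)\times\bP(\Lambda^d L)$. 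Hence the Pl\"ucker-type relations from Proposition~3.11 of \cite{F5}, which cut out $\G(d,n)$, vanish on $\G^o(n)$ when interpreted as polynomials in the larger variable set for $\G(n)$, and so vanish on all of $\G(n)$ by closure. Restricted to the boundary $\partial\G(n)\subset\bP(\fn_-^a)\times\Fl_n^a$ (where the $v$-coordinate is zero), these relations specialize to exactly the two required conditions for each $d$, as in the Grassmannian fiber description recalled before Section~3.2.

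For $Z_U\subseteq\varphi^{-1}(U)$, given $[f]\in Z_U$, we exhibit a family $(h(t),\imath(h(t)))$ in $\G^o(n)$ with $[v+h(t)]\to[f]$ and $\imath(h(t))\to U$ as $t\to\infty$. When $\mathrm{pr}_{[d+1,n]}(U_d)=\ker(f_d)$ for every $d$, the ansatz $h(t)=tf+h_0$ suffices: choosing a basis of $L_d^-$ adapted to $\ker(f_d)$ and its complement, one checks that $\imath(tf+h_0)_d$ converges to $\mathrm{Im}(f_d)+\{k+(h_0)_d(k):k\in\ker(f_d)\}$, and a suitable $h_0\in\fn_-^a$ makes this equal to $U_d$ for every $d$ (the graph-over-$\ker(f_d)$ complement of $\mathrm{Im}(f_d)$ in $U_d$ exists under the equality hypothesis). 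In general, when $\mathrm{pr}_{[d+1,n]}(U_d)\subsetneq\ker(f_d)$ for some $d$, a two-scale ansatz $h(t)=t^2f+th_1+h_0$ is needed: the $t^2$-scale contributes $\mathrm{Im}(f_d)$, the action of $(h_1)_d|_{\ker(f_d)/\mathrm{pr}_{[d+1,n]}(U_d)}$ at scale $t$ fills in the complement of $\mathrm{Im}(f_d)$ inside $U_d\cap L_d^+$, and $(h_0)_d|_{\mathrm{pr}_{[d+1,n]}(U_d)}$ at scale $1$ produces the graph part of $U_d$.

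The main obstacle is to choose $h_0,h_1\in\fn_-^a$ whose projections $(h_i)_d$ satisfy the required constraints simultaneously for every $d$, since a single matrix entry $E_{i,j}$ of $h_i$ appears in $(h_i)_d$ for the whole interval $j\le d<i$. These constraints turn out to be compatible thanks to the flag condition $\mathrm{pr}_{d+1}(U_d)\subset U_{d+1}$ on $U$, which translates into matching constraints across consecutive Grassmannian projections; the entries of $h_0$ and $h_1$ can then be fixed inductively in $d$, at each step respecting all previously imposed conditions.
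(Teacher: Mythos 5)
Your forward inclusion $\varphi^{-1}(U)\subseteq Z_U$ is sound and is essentially the paper's own argument (restriction to $\G(n)$ of the Pl\"ucker-type relations cutting out the Grassmannian graph closures); one should only note explicitly that when $f_d=0$ the two conditions are vacuous, so the rational projection to $\bP(\bC v\oplus\fr_d)\times\Gr(d,n)$ causes no trouble. The gap is in the reverse inclusion. Your two-scale ansatz $h(t)=t^2f+th_1+h_0$ does compute the correct limit in each Grassmannian factor \emph{provided} the matrices $h_0,h_1$ exist, but their existence is precisely the hard point and you only assert it. Concretely, writing $L_d^-=W_d\oplus W_d'\oplus W_d''$ with $W_d=pr_{[d+1,n]}U_d$ and $W_d\oplus W_d'=\ker(f_d)$, you need a single $h_1\in\fn_-^a$ such that for \emph{every} $d$ the block $(h_1)_d$ vanishes on $W_d$ and maps $W_d'$ isomorphically onto a complement of $\mathrm{Im}(f_d)$ in $U_d\cap L_d^+$, and a single $h_0$ whose blocks realize the graph parts of all the $U_d$ simultaneously. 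Each entry $(h_i)_{a,b}$ enters every block $(h_i)_d$ with $b\le d<a$, so these are coupled linear conditions, and the claim that they can be satisfied ``inductively in $d$'' using the degenerate flag condition $\pr_{d+1}U_d\subset U_{d+1}$ is exactly what would need to be proved; it is the substance of the theorem, not a routine verification. Even your ``easy'' case $pr_{[d+1,n]}U_d=\ker(f_d)$ already requires one $h_0$ with prescribed restrictions for all $d$ at once.

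The paper sidesteps this compatibility problem entirely: it establishes the statement only at the torus fixed points $U_{\bf J}$ (Proposition \ref{prop:fiberJ}), where the one-scale curve $h(t)=tf$ suffices for $f$ in a dense open subset of the candidate fiber and the closedness of $\varphi^{-1}(U_{\bf J})$ in $\bP(\fn_-^a)$ finishes the argument, and then transports the description to arbitrary $U$ via the $SL_n^a$-equivariance of $\varphi$ together with the affine paving of $\Fl_n^a$ by the $SL_n^a$-orbits of the points $U_{\bf J}$. If you want to keep your direct approach, you should at least (i) observe that $Z_U$ is a projectivized linear subspace and $\varphi^{-1}(U)$ is closed, so it suffices to treat generic $f\in Z_U$, and (ii) actually prove the simultaneous solvability of the constraints on $h_0,h_1$; as it stands, the equivariance route is the one that closes the argument.
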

\begin{proof}
	Thanks to Proposition 	\ref{prop:fiberJ} our theorem holds true
	for $U=U_{\bf J}$ for an admissible ${\bf J}$. We derive the general case using
	the action of the degenerate group $SL_n^a$.
	
	The group $SL_n^a$ acts on $\G(n)$ and on $\Fl^a_n$ and the map $\varphi$ is 
	$SL_n^a$ equivariant. The $SL_n^a$ orbits of the points $U_{\bf J}$ form an affine 
	paving of  $\Fl^a_n$. Hence it suffices to show that $SL_n^a.\varphi^{-1}(U_{\bf J})$
	agrees with the description given in the statement of our Theorem.
	More precisely, we need to show that if 
	$\mathrm{Im}(f_d)\subset U_d$ and $\ker(f_d)\supset pr_{[d+1,n]} U_d$, then
	for any $g\in SL_n^a$ one has
	$\mathrm{Im} (gf)_d\subset gU_d$ and $\ker (gf)_d\supset pr_{[d+1,n]} gU_d$.
	
	Let us write $g=\gamma\exp(\tau)$ for $\gamma\in B$
	and $\tau\in\fn_-^a$. 
	The action of $\exp(\tau)$ on $f$ is trivial. We also have 
\[
U_d\cap L_d^+ = \exp(\tau)U_d\cap L_d^+,\   pr_{[d+1,n]} \exp(\tau)U_d = pr_{[d+1,n]} U_d.
\]
	Hence we can assume that $\tau=0$, i.e. $g=\gamma\in B$.
	Now it suffices to note that, on the one hand,  the $B$ action on $U_d$ comes from the actions 
	on $L_d^-$ (which is a $B$ submodule of $L$) and on $L_d^+$ (which is a 
	quotient $B$-mdoule $L/L_d^-)$, and, on the other hand, the action of $B$ 
	on the collection $f_d$  (which is the conjugation action) is also glued
	from two pieces (the action on the source of $f_d$ and on its target).
\end{proof}

\begin{rem}
	Theorem \ref{thm:maingeom} says that the fibers are cut out by the combination of conditions which show up in the Grassmann case (see \cite{F5}).
\end{rem}

\subsection{Quiver Grassmannians}
The goal of this subsection is to restate Theorem \ref{thm:maingeom} in terms of 
representations of quivers.

Let $Q$ be the equioriented quiver of type $A_{n-1}$. For a pair of numbers 
$1\le a\le b\le n-1$ we denote by $M_{a,b}$ the indecomposable $Q$-module
with one-dimensional spaces at each vertex from $a$ to $b$. In particular,
$M_{1,b}=I_b$ are injective modules and $M_{a,n-1}=P_a$ are projective modules
(note that $I_{n-1}=P_1=M_{1,n-1}$). Let $P=\bigoplus_{a=1}^{n-1} P_a$ and let
$I=\bigoplus_{b=1}^{n-1} I_b$ be the direct sums of all indecomposable projective and all indecomposable injective representations. We note that
$\dim P = (1,2,\dots,n-1)$ and $P$ is isoomorphic to the path algebra of $Q$.
Also, $\dim I=(n-1,\dots,1)$ and as a $Q$-module $I$ is the dual path algebra.
It was observed in \cite{CFR1} that the degenerate flag variety $\Fl^a_{n}$
is isomorphic to the quiver Grassmannian $\Gr_{\dim P} (P\oplus I)$ (this observation has several interesting applications, see e.g. \cite{CFR2,CFR3,CL,FeFi}).   

Recall that a point in the quiver Grassmannian $\Gr_{\dim P} (P\oplus I)$
is a subrepresentation $N\subset P\oplus I$ of dimension $\dim P$.
It was shown in \cite{CFR1} that $N\simeq N_P\oplus N_I$ for certain subrepresentations $N_P\subset P$, $N_I\subset I$ (in particular, $N_I\simeq N\cap I$).  So from now on we identify $U=(U_d)_d\in \Fl_n^a$ with a point
$N_P\oplus N_I$ from the quiver Grassmannian. 

\begin{rem}
	It was shown in \cite{CFR2} that $U$ is a smooth point if and only if 
	$\mathrm{Ext}^1(N_I,P/N_P)$ vanishes.
\end{rem}

\begin{prop}\label{prop:Hom}
	Let ${\bf J}=(J_d)_{d=1}^{n-1}$ be an admissible collection and let 
	$U_{\bf J}\in \Fl_n^a$ be the corresponding point. Then for a $Q$-module
	$N_P\oplus N_I$ representing the point $U_{\bf J}$ one has   
	$\varphi^{-1} U_{\bf J} \simeq \bP\mathrm{Hom}_Q(P/N_P,N_I)$.
\end{prop}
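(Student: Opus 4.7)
The plan is to identify $\Hom_Q(P/N_P, N_I)$ with the subspace of $\fn_-^a$ described in Corollary \ref{cor:ij}, via an explicit indecomposable decomposition of $N_P$ and $N_I$ at the torus-fixed point $U_{\bJ}$.

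First I would unpack the quiver-Grassmannian identification (following \cite{CFR1}): at vertex $v$ of $Q$ one has $(P\oplus I)_v\simeq L$, with the $P$-part spanned by $\ell_a$ for $a\le v$, the $I$-part spanned by $\ell_c$ for $c\ge v+1$, and the arrows given by $\pr_{v+1}$. Under this dictionary, the torus-fixed subrepresentation corresponding to $U_{\bJ}$ decomposes as $N_P\simeq\bigoplus_{a\in [n-1]} P_{d_a}$ and $N_I\simeq\bigoplus_{b\in [n-1]} M_{v_b,b}$, where
\[
d_a=\min\{d\ge a : a\in J_d\},\qquad v_b=\min\{v\le b : b+1\in J_v\},
\]
with the convention that an empty minimum produces a trivial summand. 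Admissibility $J_d\subset J_{d+1}\cup\{d+1\}$ ensures that each support is a genuine interval, so these decompositions are well defined; consequently $P/N_P\simeq\bigoplus_{a : a\notin J_a} M_{a,d_a-1}$.

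Next I would apply the standard Hom formula for the equioriented type $A$ quiver: $\dim\Hom_Q(M_{a,b},M_{c,d})=1$ exactly when $c\le a\le d\le b$, and $0$ otherwise. Summing over indecomposable summands, $\dim\Hom_Q(P/N_P, N_I)$ equals the number of pairs $(a,b)$ satisfying $v_b\le a\le b<d_a$. A short iterated use of admissibility shows $v_b\le a\iff b+1\in J_a$, while $b<d_a\iff a\notin J_b$. Setting $j=a$ and $i=b+1$ converts these exactly into the conditions $i\in J_j$, $j\notin J_{i-1}$, $i>j$ from Corollary \ref{cor:ij}; in particular the two dimensions agree.

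To upgrade this numerical match to a canonical identification, I would use the natural embedding $\Hom_Q(P/N_P,N_I)\hookrightarrow\Hom_Q(P,I)$ obtained by precomposing with the quotient $P\twoheadrightarrow P/N_P$ and postcomposing with the inclusion $N_I\hookrightarrow I$. The target $\Hom_Q(P,I)$ is canonically identified with $\fn_-^a$ by sending the up-to-scalar unique nonzero morphism $P_a\to I_b$ (which exists iff $a\le b$) to the matrix unit $E_{b+1,a}$. Under this, the subspace $\Hom_Q(P/N_P,N_I)\subset\fn_-^a$ is precisely the span of the $E_{i,j}$ listed in Corollary \ref{cor:ij}, and projectivizing yields $\bP\Hom_Q(P/N_P,N_I)\simeq\varphi^{-1}(U_{\bJ})$.

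The main obstacle is the combinatorial bookkeeping: verifying that $v_b\le a\iff b+1\in J_a$ and $b<d_a\iff a\notin J_b$ requires careful iterated use of admissibility — once to propagate membership forward from $v_b$ up to $a$, and once to propagate non-membership backward from $b$ down to $a$. Once these two equivalences are cleanly established, all remaining steps are essentially formal.
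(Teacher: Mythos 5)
Your proof is correct and follows essentially the same route as the paper: reduce to Corollary \ref{cor:ij}, decompose $P/N_P$ and $N_I$ into indecomposables at the torus-fixed point (your $d_a-1$ and $v_b$ are the paper's $t(a)$ and $s(b)$), apply the standard $\mathrm{Hom}$ formula for the equioriented $A$-quiver, and translate the interval conditions into $i\in J_j$, $j\notin J_{i-1}$ via $i=b+1$, $j=a$. The only cosmetic difference is that you spell out the canonical embedding $\mathrm{Hom}_Q(P/N_P,N_I)\hookrightarrow\mathrm{Hom}_Q(P,I)\simeq\fn_-^a$ inside this proposition, whereas the paper records that identification in the proof of Theorem \ref{thm:quiver}.
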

\begin{proof}
By Corollary \ref{cor:ij} we need to show that for $N=U_{\bf J}$ one has 
	\begin{equation}\label{eq:ij}
		\mathrm{Hom}_Q(P/N_P,N_I) \simeq \mathrm{span}\{E_{i,j}: i > j, i\in J_j, j\notin J_{i-1}\}.
	\end{equation}
	Since as $Q$-modules $U_{\bf J}\simeq N_P\oplus N_I$, $N_P\subset P$, $N_I\subset I$, one has:
	\begin{gather*}
		P/N_P\simeq \bigoplus_{a=1}^{n-1} M_{a,t(a)},\quad
		N_I = \bigoplus_{b=1}^{n-1} M_{s(b),b}, 
	\end{gather*}
	where $t(a)$ is defined by $a\notin J_{t(a)}$, $a\in J_{t(a)+1}$ and
	$s(b)$ is defined by $b+1\in J_{s(b)}$, $b+1\notin J_{s(b)-1}$.
	The hom-space $\mathrm{Hom}_Q(M_{a,t(a)},M_{s(b),b})$ is one-dimensional if 
	$s(b)\le a\le b\le t(a)$ and vanishes otherwise. We note that 
	$s(b)\le a$ is equivalent to $b+1\in J_a$ and $b\le t(a)$ is equivalent to 
	$a\notin J_b$. Now redefining $i=b+1$, $j=a$, we arrive at \eqref{eq:ij}.
\end{proof}

\begin{lem}
	For an admissible collection ${\bf J}$ let $N_P\oplus N_I$ be the 
	quiver representation corresponding to the point $U_{\bf J}$. Then
	$\mathrm{Hom}_Q(P/N_P,N_I)$ vanishes if and only if ${J}_d=[d]$ for all $d$.  
\end{lem}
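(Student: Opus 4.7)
The plan is to translate the vanishing of $\mathrm{Hom}_Q(P/N_P,N_I)$ into a combinatorial statement about ${\bf J}$ via Corollary \ref{cor:ij}: the Hom-space vanishes if and only if there is no pair $(i,j)$ with $i>j$, $i\in J_j$, and $j\notin J_{i-1}$. The ``if'' direction is then immediate, because $J_d=[d]$ for all $d$ makes $i\in J_j$ impossible whenever $i>j$.

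For the converse I would pick the smallest $d^*$ with $J_{d^*}\ne [d^*]$ and produce a witness $(i,j)$ from the admissibility constraint $J_d\subset J_{d+1}\cup\{d+1\}$. First I would use $J_{d^*-1}=[d^*-1]$ (taken as $\emptyset$ when $d^*=1$) together with admissibility to deduce $[d^*-1]\subset J_{d^*}$, forcing $J_{d^*}=[d^*-1]\cup\{i^*\}$ for some $i^*>d^*$. Next I would invoke the propagation principle that an element $a\in J_d$ persists in $J_{d+1}$ whenever $a\ne d+1$: applied to $i^*$ this yields $i^*\in J_d$ for all $d\in [d^*,i^*-1]$, and applied to each element of $[d^*-1]$ it yields $[d^*-1]\subset J_d$ for all $d\ge d^*-1$. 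In particular $J_{i^*-1}\supset \{i^*\}\cup [d^*-1]$, and since $|J_{i^*-1}|=i^*-1$ the set $[i^*-1]\setminus J_{i^*-1}$ is a singleton $\{j_0\}$ with $j_0\in [d^*,i^*-1]$. The pair $(i,j)=(i^*,j_0)$ then satisfies $i>j$, $i\in J_j$, and $j\notin J_{i-1}$, giving by Corollary \ref{cor:ij} a nonzero element of $\mathrm{Hom}_Q(P/N_P,N_I)$.

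The only delicate step is the propagation bookkeeping, i.e.\ checking that $i^*$ stays in every $J_d$ up to $d=i^*-1$ and that the elements of $[d^*-1]$ never leave once they have entered. This is routine since each of these elements is strictly less than the relevant $d+1$, but it is the point of the argument at which one genuinely uses admissibility rather than just cardinalities; the rest is mechanical.
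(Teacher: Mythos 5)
Your proof is correct, and it takes a genuinely different route from the paper's. The paper argues via the mutation/orbit-closure poset structure on admissible collections (drawing on the moment graph description from \cite{CFR2} or the Schubert variety identification in \cite{CL}), showing that the vanishing of the Hom-space forbids any mutation from being applied first to the top element $([d])_d$ and finishing by a ``proceeding in this way'' induction. Your argument instead works directly with the minimal index $d^*$ at which $J_{d^*}\ne[d^*]$ and the propagation consequence of admissibility ($a\in J_d$, $a\ne d+1$ implies $a\in J_{d+1}$), producing an explicit witness pair $(i^*,j_0)$. This is more elementary and self-contained --- it needs only the admissibility condition and the translation into $E_{i,j}$'s from Corollary~\ref{cor:ij} (more precisely, \eqref{eq:ij} from the proof of Proposition~\ref{prop:Hom}) --- whereas the paper's route gives structural insight but relies on external machinery.

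One small inaccuracy, which does not affect validity: the set $[i^*-1]\setminus J_{i^*-1}$ need not be a singleton. For instance with $n=5$, $J_1=\{3\}$, $J_2=\{3,5\}$ is admissible with $d^*=1$, $i^*=3$, and $[2]\setminus J_2=\{1,2\}$ has two elements, since $J_{i^*-1}$ can contain elements larger than $i^*$. What your cardinality count and the inclusion $\{i^*\}\cup[d^*-1]\subset J_{i^*-1}$ actually give is that $[i^*-1]\setminus J_{i^*-1}$ is nonempty and contained in $[d^*,i^*-1]$, which is exactly what the argument needs: any $j_0$ in this set yields the desired witness.
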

\begin{proof}
Recall that the set of admissible collections ${\bf J}$ form a poset with 
the poset structure induced by the orbits closure. The largest element in the
poset is given by ${\bf J}= ([d])_{d=1}^{n-1}$ and any other element can be reached from the
	largest one by applying certain mutation operations (this can be seen explicitly
	from the description of the moment graph \cite{CFR2} or via the identification
	of the degenerate flag varieties with certain type $A$ Schubert varieties \cite{CL}). 
	The mutations are labeled by pairs $a>b$ and the action is defined as
	follows: the mutation $a,b$ affects only $J_d$ with $b\le d<a$. If these 
	inequalities hold true, then $J_d$ is affected if and only if $b\in J_d$,
	$a\notin J_d$. In this case $J_d$ is changed to $J_d\setminus \{b\}\cup \{a\}$.
	
	Now assume that  $\mathrm{Hom}_Q(P/N_P,N_I)=0$, i.e. there is no $i > j$ such that
	$i\in J_j$ and $j\notin J_{i-1}$ (see \eqref{eq:ij}). Our goal is to deduce that
	$J_d=[d]$. We start with the case $i=j+1$. Our condition says that 
	if $j+1\in J_j$, then $j\in J_j$ as well. This means that in the process 
	of getting ${\bf J}$ from the largest collection $([d])_d$ the first mutation applied was not of the form $(j+1,j)$. Now let us consider the case $i=j+2$. Then we know
	that $j+2\in J_j$ implies $j\in J_{j+1}$. It means that a mutation labeled by 
	$(j+2,j)$ was not applied first to the element $([d])_d$ (since otherwise 
	$j+2$ would be present in both $J_j$ and $J_{j+1}$ and $j$ would not be present
	in both sets). Proceeding in this way we obtain that no mutation could have
	been applied first when moving from the largest element to ${\bf J}$. Hence 
	$J_d=[d]$ for all $d$, $N_I=0$, $N_P=P$.
\end{proof}

\begin{thm}\label{thm:quiver}
	For any point ${U}\in \partial\Fl_n^a$ the preimage $\varphi^{-1}({U})$ is 
	isomorphic to $\bP\mathrm{Hom}_Q(P/N_P,N_I)$, where ${U}\simeq N_P\oplus N_I$ 
	as $Q$-modules.
\end{thm}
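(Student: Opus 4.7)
The plan is to reduce the general case to Proposition \ref{prop:Hom} using the $SL_n^a$-equivariance of $\varphi$ that was established in the proof of Theorem \ref{thm:maingeom}. Since the $SL_n^a$-orbits of the torus fixed points $U_{\bf J}$ form an affine paving of $\Fl_n^a$, every $U\in\partial\Fl_n^a$ can be written as $U=gU_{\bf J}$ for some $g\in SL_n^a$ and some admissible collection ${\bf J}$. Equivariance immediately yields $\varphi^{-1}(U)=g\cdot\varphi^{-1}(U_{\bf J})$, so the two fibers are isomorphic as varieties.

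By Proposition \ref{prop:Hom} the fiber over $U_{\bf J}$ is isomorphic to $\bP\Hom_Q(P/N_P',N_I')$, where $N_P'\oplus N_I'$ is the $Q$-module decomposition of $U_{\bf J}$ as a subrepresentation of $P\oplus I$. The remaining task is to identify this with $\bP\Hom_Q(P/N_P,N_I)$ for the decomposition of $U$ itself. I would do this by invoking the fact that, under the identification $\Fl_n^a\simeq\Gr_{\dim P}(P\oplus I)$ from \cite{CFR1}, the $SL_n^a$-action on $\Fl_n^a$ preserves the isomorphism type of subrepresentations: the unipotent radical $\exp(\fn_-^a)$ is generated by operators built from the quiver arrows, hence moves a subrepresentation to an isomorphic one, while the Borel subgroup $B$ acts by rescaling of coordinates adapted to the indecomposable summands and therefore also respects isomorphism type. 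Consequently $(N_P,N_I)\simeq(N_P',N_I')$ as $Q$-modules, and $\Hom_Q(P/N_P,N_I)\simeq \Hom_Q(P/N_P',N_I')$, completing the argument.

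The main obstacle is precisely this last compatibility claim: verifying cleanly that $SL_n^a$-orbits on the quiver Grassmannian are contained in the strata given by $Q$-module isomorphism classes of pairs $(N_P,N_I)$. Once this is granted, the theorem is a transport-of-structure consequence of Proposition \ref{prop:Hom}. An alternative, more direct route would be to bypass the group action entirely and prove the theorem by matching Theorem \ref{thm:maingeom} with the categorical description: one shows that, after identifying the basis vectors $\ell_i$ with vectors in the appropriate indecomposable summands of $P\oplus I$, the conditions $\mathrm{Im}(f_d)\subset U_d$ and $\ker(f_d)\supset\mathrm{pr}_{[d+1,n]}U_d$ for all $d$ are equivalent to $f$ being a $Q$-module homomorphism from $P/N_P$ to $N_I$. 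This path avoids the group-theoretic step but at the cost of rather delicate bookkeeping of indecomposables.
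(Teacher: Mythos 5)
Your primary route is genuinely different from the paper's proof, and it has the gap you yourself flag. The paper does \emph{not} reduce to Proposition~\ref{prop:Hom} via equivariance; instead it takes exactly the ``alternative, more direct route'' you mention at the end. Concretely, the paper's proof is three short steps: (i) $\Hom_Q(P/N_P,N_I)$ is the subspace of $\Hom_Q(P,I)$ of maps killing $N_P$ with image in $N_I$; (ii) $\Hom_Q(P,I)\simeq \fn_-^a$ (an indecomposable $M_{a,n-1}\subset P$ maps nontrivially to $M_{1,b}\subset I$ iff $a\le b$); (iii) under this identification the two conditions in (i) are precisely the conditions $\ker(f_d)\supset \pr_{[d+1,n]}U_d$ and $\mathrm{Im}(f_d)\subset U_d$ from Theorem~\ref{thm:maingeom}. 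Since Theorem~\ref{thm:maingeom} already covers arbitrary $U\in\partial\Fl_n^a$ (the extension from $U_{\bf J}$ to general $U$ was done there, once and for all, by $SL_n^a$-equivariance), there is no need to invoke equivariance a second time at the quiver level. The bookkeeping you were worried about is lighter than you feared precisely because of this.

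Your primary route, by contrast, needs the assertion that the $SL_n^a$-action on $\Gr_{\dim P}(P\oplus I)$ preserves the $Q$-module isomorphism type of the pair $(N_P,N_I)$, and your justification for it (``operators built from quiver arrows,'' ``rescaling adapted to summands'') is heuristic rather than a proof. The statement is in fact true --- the $SL_n^a$-orbits coincide with the cells of the affine paving, which are the $\mathrm{Aut}_Q(P\oplus I)$-orbits, and the latter visibly preserve isomorphism type --- but establishing that the $SL_n^a$-action factors through $\mathrm{Aut}_Q(P\oplus I)$ requires an explicit check or a precise citation to \cite{CFR1,CFR2}, which you do not supply. Moreover, even granting the compatibility, your route redundantly re-derives what Theorem~\ref{thm:maingeom} already gives. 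If you want to pursue the equivariant approach, you should either cite the orbit/stratum identification from \cite{CFR1} explicitly, or simply note that Theorem~\ref{thm:maingeom} already extends Proposition~\ref{prop:fiberJ} to general $U$, and then do the categorical translation as the paper does.
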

\begin{proof}
We fist note that the theorem is stated for a point $U$ from the boundary,
since otherwise the hom-space vanishes (on the open cell $\varphi^{-1}$ is an isomorphism). 
	
	The space $\mathrm{Hom}_Q(P/N_P,N_I)$ is identified with the subspace of 
	$\mathrm{Hom}_Q(P,I)$ consisting of homomorphisms vanishing on $N_P$ 
	whose image belongs to $N_I$. Recall the identification 
	$\mathrm{Hom}_Q(P,I)\simeq \fn^a_-$ (an indecomposable summand $M_{a,n-1}\subset P$
	can be mapped nontrivially to $M_{1,b}\subset I$ for $a\le b$). 
	This agrees with Theorem \ref{thm:maingeom}. 
\end{proof}

\subsection{Coordinate rings and degenerations}
In this subsection we collect geometric consequences from combinatorial and 
representation theoretic results.

\begin{cor}\label{cor:hcr}
	The homogeneous coordinate ring of $\G(n)$ with respect to the Pl\"ucker type
	embedding $\G(n)\subset \bP(V)\times\prod_{d=1}^{n-1} \bP(\Lambda^dL)$ is
	isomorphic to $\bigoplus (L^a_{\la,M})^*$, where 
	$\la\in \bigoplus_{i=1}^{n-1}\bZ_{\ge 0}\omega_i$, $M\ge 0$.
\end{cor}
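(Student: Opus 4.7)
The plan is to compute each multi-graded piece of the homogeneous coordinate ring of $\G(n)$ and match it with $(L^a_{\la,M})^*$ for $\la=\sum_d m_d\om_d$. I would first reinterpret the multi-graded piece geometrically: the degree-$(M,\bbm)$ piece of the ambient coordinate ring of $\bP(V)\times\prod_d\bP(\Lambda^dL)$ is $S^M(V^*)\T\bigotimes_d S^{m_d}((\Lambda^dL)^*)$, and its dual $S^M(V)\T\bigotimes_d S^{m_d}(\Lambda^dL)$ is naturally spanned by the evaluation vectors $w^M\T\bigotimes_d u_d^{m_d}$ attached to points $([w],([u_d])_d)$ of the ambient product. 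Hence the dual of the degree-$(M,\bbm)$ piece of the coordinate ring of $\G(n)$ is the span of the evaluation vectors ranging over points of $\G(n)$. Since $\G(n)$ is the closure of the $SL_n^a$-orbit of $p_0=([v],([\ell_1\wedge\cd\wedge\ell_d])_d)$, this span coincides with the cyclic $\msl_n^a$-submodule $W_{\la,M}:=\U(\msl_n^a)\cdot w_{\la,M}$ generated by $w_{\la,M}:=v^M\T\bigotimes_d(\ell_1\wedge\cd\wedge\ell_d)^{m_d}$.

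The core step is to identify $W_{\la,M}$ with $L^a_{\la,M}$. Because $w_{\la,M}$ has weight $\la$ and is annihilated by $\fn\subset\fb$, one has $W_{\la,M}=\U(\fn_-^a)w_{\la,M}$. The FFLV-type relations $f_\be^{(\la,\be)+1}\cdot\bigotimes_d(\ell_1\wedge\cd\wedge\ell_d)^{m_d}=0$ hold in the Grassmannian factors by \cite{FFL1}, while $v^M$ is annihilated by all degree-$(M{+}1)$ monomials in the $f_\be$ by the defining relations of $V_M$; applying the coproduct one recovers the full list of relations \eqref{eq:defrel} on $w_{\la,M}$ and obtains a surjection $L^a_{\la,M}\twoheadrightarrow W_{\la,M}$. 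To prove it is an isomorphism, I would verify that the images $\{f^\bs w_{\la,M}:\bs\in S_{\la,M}\}$ are linearly independent in the ambient tensor product. By Theorem \ref{thm:MS}, every $\bs\in S_{\la,M}$ decomposes as a sum of $M$ indices from $S_{0,1}$ and $m_d$ indices from $S_{\om_d}$; this combined with factorwise classical FFLV linear independence \cite{FFL1,FFL2} yields the required independence, and in conjunction with Theorem \ref{thm:rel+bas} it forces $W_{\la,M}\simeq L^a_{\la,M}$.

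The main obstacle is precisely this linear independence, because the coproduct entangles the actions on the different tensor factors: a single $f^\bs w_{\la,M}$ is a sum indexed by the ways of distributing the exponents $\bs$ among the factors. The Minkowski sum identity $S_{\la,M}=M\cdot S_{0,1}+\sum_d m_d\cdot S_{\om_d}$ is precisely what disentangles this, producing for each $\bs$ a canonical factorwise decomposition whose corresponding summand of $f^\bs w_{\la,M}$ is, relative to a suitable term order on multi-exponents, a leading pure tensor whose components are the classical FFLV basis vectors in each factor. Summing the identifications of the multi-graded pieces over all $(M,\la)$ then yields the stated isomorphism of the homogeneous coordinate ring of $\G(n)$ with $\bigoplus(L^a_{\la,M})^*$.
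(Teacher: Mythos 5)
Your proposal is essentially the same approach the paper takes, which is deferred to the Grassmannian analogue (Corollary 3.18 of \cite{F5}) and relies on Theorem \ref{thm:rel+bas} as the new input; you have correctly unpacked what that deferral hides. You identify the dual of each multi-graded piece of the coordinate ring with the cyclic $\msl_n^a$-module $W_{\la,M}$ generated by the evaluation vector $w_{\la,M}=v^M\T\bigotimes_d(\ell_1\wedge\cd\wedge\ell_d)^{m_d}$, then match it with $L^a_{\la,M}$ using the defining relations (to get a surjection $L^a_{\la,M}\twoheadrightarrow W_{\la,M}$) and the Minkowski-sum/leading-term argument (to get linear independence of $\{f^\bs w_{\la,M}:\bs\in S_{\la,M}\}$); this is exactly the combination Theorem \ref{thm:rel+bas} $+$ Theorem \ref{thm:MS} $+$ \cite{FFL2} that the paper cites.

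One remark on economy: since $V_M$ is realized inside $S^M(V)$ as the cyclic submodule through $v^M$, and $L^a_\la$ is realized inside $\bigotimes_d S^{m_d}(\Lambda^d L)$ as the cyclic submodule through the highest weight vector, the equality $W_{\la,M}=L^a_{\la,M}$ follows directly from the definition $L^a_{\la,M}=\U(\fn^a_-)(\ell_\la\T v^{\T M})$ once those two realizations are granted, without re-running the relations-plus-independence argument; your more hands-on route is still valid, and the favourable-module machinery from \cite{FFL2} you invoke for the leading-term extraction is in any case needed to justify the realizations themselves. The only step I would ask you to spell out more carefully is the claim that the Minkowski decomposition of $\bs$ produces a distinguished leading pure tensor under a suitable monomial order: the decomposition $S_{\la,M}=M\cdot S_{0,1}+\sum_d m_d\cdot S_{\om_d}$ guarantees existence but not canonicity, and the order compatible with Minkowski addition must be chosen as in \cite{FFL2}; as written this is a gesture rather than an argument, but it is the same gesture the paper makes.
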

\begin{proof}
The proof goes along the same lines as the proof of Corollary 3.18, \cite{F5}.
The key new ingredient is Theorem \ref{thm:rel+bas}. 
\end{proof}

\begin{cor}\label{cor:toricdeg}
	The variety $\G(n)$ admits flat degeneration to the toric variety defined 
	by the polytope $X_{\la,M}$ for any weight $\la$ with $m_d>0$ for any $d$ 
	and $M>0$.
\end{cor}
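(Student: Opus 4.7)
The plan is to apply the standard Rees-algebra toric-degeneration machinery (compare \cite{FaFL1,FaFL2,FFL2,M}) to the section ring of $\G(n)$ cut out by the multi-weight $(\la,M)$. By Corollary \ref{cor:hcr}, the relevant coordinate ring under the embedding into $\bP(L^a_{\la,M})$ (i.e.\ the $(\la,M)$-section ring) is
\[
R=\bigoplus_{k\ge 0}(L^a_{k\la,kM})^*,
\]
and by Theorem \ref{thm:rel+bas} each graded component carries the basis $\{\xi_\bs:\bs\in S_{k\la,kM}\}$ dual to the monomial basis $\{f^\bs\ell_{k\la,kM}\}$. The hypotheses $m_d>0$ for all $d$ and $M>0$ ensure that the line bundle determined by $(\la,M)$ is very ample, so that $\mathrm{Proj}\,R\simeq\G(n)$, and that the polytopes $X_{k\la,kM}$ are of full dimension in $\bR^P$.

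Next, I fix a total homogeneous monomial order $\prec$ on $\bZ_{\ge 0}^{P}$ (for instance a homogeneous reverse-lex order) and declare $\xi_\bs$ to have filtration degree equal to the rank of $\bs$ in the induced total order on $S_{k\la,kM}$. This yields a decreasing filtration $F^\bullet$ on $R$. The central claim is that for $\bs\in S_{k\la,kM}$ and $\bs'\in S_{k'\la,k'M}$ one has
\[
\xi_\bs\cdot\xi_{\bs'}=c_{\bs,\bs'}\,\xi_{\bs+\bs'}+\sum_{\bt\succ\bs+\bs'}c_\bt\,\xi_\bt,\qquad c_{\bs,\bs'}\neq 0,
\]
where by Theorem \ref{thm:MS} the exponent $\bs+\bs'$ genuinely lies in $S_{(k+k')\la,(k+k')M}$, so that $\xi_{\bs+\bs'}$ is a legitimate basis vector. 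Granted this leading-term identity, the associated graded ring $\mathrm{gr}\,R$ is the semigroup algebra $\bC[\bigsqcup_{k\ge 0}S_{k\la,kM}]$, which by Theorem \ref{thm:MS} is generated in degree one by $S_{\la,M}$ and is therefore the homogeneous coordinate ring of the projective toric variety associated to $X_{\la,M}$. The Rees algebra of $F^\bullet$ is then $\bC[t]$-flat with generic fiber $R$ and special fiber $\mathrm{gr}\,R$, and taking $\mathrm{Proj}$ produces the desired flat degeneration.

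The main obstacle is the leading-term identity displayed above; this is where the work lies. Its verification parallels the leading-term arguments of \cite{F5} (and of \cite{FaFL2,FFL2}): one evaluates $\xi_\bs\cdot\xi_{\bs'}$ on the PBW basis of $L^a_{(k+k')\la,(k+k')M}$, uses the defining relations \eqref{eq:defrel} together with Lemma \ref{lem:manychains} to straighten the resulting monomials, and checks that the coefficient of $f^{\bs+\bs'}\ell_{(k+k')\la,(k+k')M}$ is nonzero while any other $\bt$ appearing with nonzero coefficient must satisfy $\bt\succ\bs+\bs'$ in the chosen order. Once this is in place the Rees construction produces the one-parameter family, and its flatness is automatic from the preservation of graded dimensions guaranteed by the Minkowski sum property.
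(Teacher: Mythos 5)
Your argument is the favourable-module/Rees-filtration degeneration of \cite{FFL2}, which is precisely what the paper's one-line proof invokes via \cite{F5}, Corollary 3.19, together with Theorem \ref{thm:MS}; you have correctly isolated the inputs (Corollary \ref{cor:hcr} for the section ring, Theorem \ref{thm:rel+bas} for the monomial basis, Theorem \ref{thm:MS} for the Minkowski property) and identified the leading-term lemma as the crux. The one caveat is that the filtration order cannot be an arbitrary homogeneous term order --- it must be chosen compatibly with the straightening relations used in the spanning argument of Theorem \ref{thm:rel+bas} (the same order as in \cite{F5,FFL2}), not simply ``a homogeneous reverse-lex order'' --- but you implicitly acknowledge this by deferring the leading-term verification to the arguments of \cite{F5}.
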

\begin{proof}
The proof goes along the same lines as the proof of Corollary 3.19, \cite{F5} using
Theorem \ref{thm:MS}.
\end{proof}

\section{Parabolic case}\label{sec:partial}
In this section we generalize the whole picture to the case of arbitrary 
(standard) parabolic subalgebra (the partial flag varieties case).
We note that \cite{F5} corresponds to the case of maximal parabolic subalgebra.

\subsection{The setup}
Let $\bd=(d_1,\dots,d_s)$ be a collection of positive integers satisfying 
$1\le d_1<\dots <d_s<n$; in particular, $s\le n-1$.
Let $\fp_\bd\subset\msl_n$ be the corresponding parabolic Lie subalgebra (i.e.
$\fp_\bd$ contains $\fb$ and negative roots $f_{\al_i}$ for $i=d_1,\dots,d_s$).
Let $\fr_\bd\subset \fn_-$ be the radical of $\fp_\bd$, $\msl_n=\fp_\bd\oplus \fr_\bd$. 

\begin{rem}
The radical $\fr_\bd$ is spanned by the matrix units $E_{i,j}$ such that there
exists an $a=1,\dots,s$ such that $d_{a-1}< j\le d_a$ and $i>d_a$ (with
$d_0=0$). One has $\dim \cF_\bd = \dim \fr_\bd=\sum_{a=1}^s (d_a-d_{a-1})(n-d_a)$.
In what follows we denote this number by $N_\bd$. 	
\end{rem}

\begin{rem}
	For $d=1,\dots,n-1$  recall the abelian radical $\fr_d\subset \fn_-$ corresponding
	to the $d$-th maximal parabolic subalgebra. Then $\fr_\bd=\sum_{a=1}^s \fr_{d_a}\subset \fn_-$.  	
\end{rem}

\subsection{Polytopes}
Let $P_\bd$ be the subposet of the poset $P$ (see Section \ref{sec:comb}) consisting of  
pairs $(i,j)$ such that $E_{i,j}\in \fr_\bd$. 
In particular, $P_\bd=\cup_{a=1}^s P_{d_a}$, where $P_{d_a}$ is the subposet
in $P$ corresponding to the maximal parabolic subalgebra (the $s=1$ case).  
The extension $P_\bd\subset \overline{P}_\bd\subset \overline{P}$ is obtained by adding elements $(1,1)$
and $(d_a+1,d_a+1)$ for all $a$. Now let $\bbm\in\bZ_{\ge 0}^{n-1}$ be a 
collection satisfying $m_i=0$ unless $i=d_a$ for some $a$.

For a non-negative integer $M$ we define a polytope $X_{\bd,\bbm,M}\subset \bR_{\ge 0}^{P_\bd}$ by the following set of inequalities
labeled by subposets $P'\subset P_\bd$:
\begin{equation}
	\sum_{\al\in P'} s_\al \le M + \sum_{a=1}^{s} m_{d_a} w(P'\cap P_{d_a}).
\end{equation} 
Let $S_{\bd,\bbm,M}=X_{\bd,\bbm,M}\cap \bZ_{\ge 0}^{P_\bd}$.
\begin{rem}
	The polytopes $X_{\bd,\bbm,M}$ do depend on $\bd$, because for different 
	$\bd$ the ambient space containing $X_{\bd,\bbm,M}$ is different. 
\end{rem} 

The following theorem is proved in the same way as Theorem \ref{thm:MS}.
\begin{thm}
	Let $\bbm,\bbm'\in\bZ_{\ge 0}$ satisfy $m_i=0$ unless $i=d_a$ for some $a$.
	Then for any $M,M'\ge 0$ one has
	\[
	S_{\bd, \bbm,M} + S_{\bd,\bbm',M'}=S_{\bd,\bbm+\bbm',M+M'},\ X_{\bd,\bbm,M} + X_{\bd,\bbm',M'}=X_{\bd,\bbm+\bbm',M+M'}.
	\]
\end{thm}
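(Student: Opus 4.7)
The plan is to follow the proof of Theorem \ref{thm:MS} verbatim, with the poset $P$ replaced by $P_\bd$ and the family $(P_d)_{d=1}^{n-1}$ replaced by $(P_{d_a})_{a=1}^s$. The inclusion $S_{\bd,\bbm,M}+S_{\bd,\bbm',M'}\subset S_{\bd,\bbm+\bbm',M+M'}$ is immediate from the definitions. Two base cases are also immediate: $S_{\bd,0,M}+S_{\bd,0,M'}=S_{\bd,0,M+M'}$ (additivity of the standard simplices on $\bR_{\ge 0}^{P_\bd}$), and $S_{\bd,\bbm,0}+S_{\bd,\bbm',0}=S_{\bd,\bbm+\bbm',0}$ (this is just the marked FFLV Minkowski additivity \cite{FFL1} read off on the subposet $P_\bd$, since chains in $P_\bd$ are a fortiori chains in $P$). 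Consequently, it suffices to establish the unit step $S_{\bd,\bbm,M}+S_{\bd,0,1}=S_{\bd,\bbm,M+1}$.

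The first substep is the parabolic analogue of Lemma \ref{lem:manychains}: the polytope $X_{\bd,\bbm,M}$ is equivalently cut out by the inequalities
\[
\sum_{(i,j)\in\bigcup_a C_a}x_{i,j}\le M+\sum_{a=1}^{r}\sum_{e:\ s(C_a)\le d_e<f(C_a)}m_{d_e}
\]
indexed by collections of chains $C_1,\dots,C_r$ in $P_\bd$. The proof copies that of Lemma \ref{lem:manychains}: each chain meeting $P_{d_e}$ contributes a summand $m_{d_e}$, and conversely the greedy chain-covering of an arbitrary subposet $P'\subset P_\bd$ goes through unchanged, since the greedy chains are built entirely out of elements of $P'\subset P_\bd$ and hence automatically lie in $P_\bd$.

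Next, one imitates the duality-of-linear-programming step. For $x\in\bZ_{\ge 0}^{P_\bd}$ set
\[
M(x)=\max_{P'\subset P_\bd}\Bigl(\sum_{(i,j)\in P'}x_{i,j}-\sum_{a=1}^s m_{d_a}w(P'\cap P_{d_a})\Bigr),
\]
so $x\in S_{\bd,\bbm,M}$ iff $M(x)\le M$, and we must find $\delta\in P_\bd$ with $M(x-\mathbf{1}_\delta)\le M(x)-1$ whenever $M(x)\ge 1$. Writing $M(x)$ in terms of edge variables $k_{\al,\be}\in\{0,1\}$ on $\overline{P}_\bd$, now with marked elements $(1,1)$ and $(d_a+1,d_a+1)$ for $a=1,\dots,s$, the flow-type constraints \eqref{eq:k=k<1} keep the same form, and the dual linear program (involving auxiliary variables $g_\al,h_\be$, minimizing $\sum g_\al$) is structurally identical to the one in Theorem \ref{thm:MS}. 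Hence the Theorem A.1 argument of \cite{F5} produces the required $\delta$ verbatim, and the polytope identity follows from the integral one by standard scaling.

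The main potential obstacle is the chain-covering step of the parabolic analogue of Lemma \ref{lem:manychains}: a priori one might worry that the greedy algorithm could try to extend through a column not in $P_\bd$. However, since all chain elements are drawn from $P'\subset P_\bd$, the constructed chains are automatically contained in $P_\bd$, and the width count $w(P'\cap P_{d_a})$ is computed intrinsically inside each $P_{d_a}$. Thus the entire argument of Theorem \ref{thm:MS} transfers with no new combinatorial input.
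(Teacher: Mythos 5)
Your proposal does follow the route that the paper itself indicates (it just says ``proved in the same way as Theorem \ref{thm:MS}''), and the reduction to the unit step, the parabolic analogue of Lemma \ref{lem:manychains}, and the base cases are all correctly identified. However, there is one point where ``structurally identical'' hides a genuine gap, and there is also a shortcut that avoids the whole LP redo.

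The gap concerns the marked poset. You (following the paper's displayed definition of $\overline{P}_\bd$) take the marked elements to be $(1,1)$ and $(d_a+1,d_a+1)$ only. But in the LP encoding, the maximal element of every chain must have an outgoing edge to some marked $(i,i)$ with $(i,i)$ \emph{above} it in the poset, i.e.\ $i\ge i_{\max}$. This need not exist in $\overline{P}_\bd$. Concretely, take $\bd=(2)$, $n=5$: the element $(5,2)$ is maximal in $P_\bd$, yet the only diagonal elements in $\overline{P}_\bd$ are $(1,1)$ and $(3,3)$, neither of which dominates $(5,2)$. The flow constraint \eqref{eq:k=k<1} then forces $(5,2)$ to have zero in-flow and out-flow, so no chain containing $(5,2)$ can be encoded, and the LP fails to see, e.g., the inequality $x_{5,2}\le M+m_2$ coming from $P'=\{(5,2)\}$. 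More generally, even when an exit exists, exiting to the nearest available $(d_a+1,d_a+1)$ can overcharge the chain by $m_{d_a}$ relative to the correct price $m_{s(C)}+\dots+m_{f(C)-1}$. The fix is to run the LP on the \emph{full} set of diagonal elements $(i,i)$, $i\in[n]$, with markings $m_1+\dots+m_{i-1}$; since $\bbm$ is supported on $\bd$ these markings are constant on the gaps $[d_a+1,d_{a+1}]$, so nothing changes combinatorially, but every chain now has the correct exit and entry, and the argument of \cite{F5}, Theorem A.1, goes through.

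Alternatively, one can bypass the parabolic LP entirely and deduce the statement from Theorem \ref{thm:MS} directly. For $\bbm$ supported on $\bd$, one checks that
\[
X_{\bd,\bbm,M}\;=\;X_{\bbm,M}\cap\{x_\al=0:\ \al\in P\setminus P_\bd\}\,,
\]
and similarly for the sets of lattice points. Indeed, the $X_{\bbm,M}$ inequality for a subposet $P''\subset P$, restricted to this coordinate slice, reads $\sum_{\al\in P''\cap P_\bd}x_\al\le M+\sum_a m_{d_a}\,w(P''\cap P_{d_a})$, which is weaker than the $X_{\bd,\bbm,M}$ inequality for $P''\cap P_\bd$ because $w(P''\cap P_{d_a})\ge w\bigl((P''\cap P_\bd)\cap P_{d_a}\bigr)$; conversely, the $X_{\bd,\bbm,M}$ inequality for $P'\subset P_\bd$ is literally the $X_{\bbm,M}$ inequality for $P''=P'$. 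Now if $\bs\in S_{\bd,\bbm+\bbm',M+M'}$, Theorem \ref{thm:MS} yields $\bs=\bs_1+\bs_2$ with $\bs_1\in S_{\bbm,M}$, $\bs_2\in S_{\bbm',M'}$, and nonnegativity of the coordinates forces $\bs_1,\bs_2$ to vanish wherever $\bs$ vanishes, in particular on $P\setminus P_\bd$; hence $\bs_i\in S_{\bd,\bbm^{(i)},M^{(i)}}$. This is shorter than repeating the LP argument and sidesteps the marked-poset subtlety altogether.
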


\subsection{Representations}
Let $\msl_{n,\bd}^a$ be the following degeneration of the Lie algebra $\msl_n$
(see \cite{F-M,PY2,Ya}).
One has a decomposition $\msl_{n,\bd}^a\simeq \fp_\bd\oplus \fr_\bd^a$, where 
$\fp_\bd$ is a subalgebra of $\msl_{n,\bd}^a$, $\fr_\bd^a$ is an abelian
ideal isomorphic to $\fr_\bd$ as a vector space and the action of $\fp_\bd$
on $\fr_\bd^a$ comes from the identification $\fr_\bd^a\simeq \msl_n/\fp_\bd$.

Let $\la=\sum_{a=1}^{s} m_{d_a}\om_{d_a}$ be an integral $\bd$-dominant weight 
of $\msl_n$. 
Then the corresponding irreducible highest weight $\msl_n$ module $L_\la$ 
is generated from the highest weight vector $\ell_\la$ by the action of the 
radical $\fr_\bd$. Hence the standard PBW filtration on the universal
enveloping algebra $\U(\fr_\bd)$ induces 
the increasing filtration on $L_\la$.
The associated graded space $L_\la^a$ is a module over the 
degenerate Lie algebra $\msl_{n,\bd}^a$.

Now let $V_\bd\simeq V_\bd(0) \oplus V_\bd(1)$ be a cyclic $\msl_{n,\bd}^a$
module, where  $V_\bd(0)$ is one-dimensional spanned by  a cyclic vector $v$
and  $V_\bd(1)$ is isomorphic to $\fr^a_\bd$ as a vector space. 
Given a non-negative integer $M$ and $\la$ as above we define 
\[
V_{\bd,M}=V_\bd^{\odot M} = \U(\fr_\bd^a)v^{\T M},\ 
L^a_{\bd,\la,M}= V_{\bd,M}\odot L_\la^a = \U(\fr_\bd^a) (\ell_\la\T v^{\T M}).
\]
As before, we denote the cyclic vector of $L^a_{\bd,\la,M}$ by $\ell_{\la,M}$. 

\begin{rem}
Let $\la$ be a $\bd$ dominant weight (i.e. $(\la,\al_i)=0$ unless $i\in\bd$). 
Then $L^a_{\la,M}$ is not isomorphic to  $L^a_{\bd,\la,M}$, since
$L^a_{0,1}\simeq \fn^a_-$ and $L^a_{\bd,0,1}\simeq \fr_\bd$.
\end{rem}

The following theorem is proved along the same lines as Theorem \ref{thm:rel+bas}.

\begin{thm}
	The relations 
	\[
	f_{\beta_1}^{a_1}\dots f_{\beta_r}^{a_r} \ell_{\la,M} = 0 \text{ if }
	f_{\beta_i}\in\fr_\bd, a_1+\dots + a_r > M + \sum_{i=1}^r (\la,\beta_i)
	\]
	are defining for the $\msl_{n,\bd}^a$ module $L^a_{\bd,\la,M}$. The vectors  $f^\bs\ell_{\la,M}$, $\bs\in S_{\bd,\la,M}$ form
	a basis of $L_{\bd,\la,M}$.
\end{thm}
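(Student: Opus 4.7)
My plan is to follow the template of Theorem \ref{thm:rel+bas}, carrying out three parallel steps: verifying that the stated relations hold, proving that the vectors $f^\bs\ell_{\la,M}$ with $\bs\in S_{\bd,\la,M}$ span $L^a_{\bd,\la,M}$, and proving their linear independence.

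For the first step, I will repeat the argument of Lemma \ref{lem:rel} verbatim with $\fn^a_-$ replaced by $\fr^a_\bd$. The inputs are the classical relation $f_\beta^{(\la,\beta)+1}\ell_\la=0$ in $L_\la$ (hence in $L_\la^a$) for any $\beta\in\Phi_+$, and the fact that $V_{\bd,M}$ kills all monomials of degree $>M$ in $\fr^a_\bd$. Expanding $f_{\beta_1}^{a_1}\cdots f_{\beta_r}^{a_r}(\ell_\la\otimes v^{\otimes M})$ via the abelian coproduct on $\fr^a_\bd$ writes it as a sum of tensors $\prod f_{\beta_i}^{b_i}\ell_\la\otimes\prod f_{\beta_i}^{c_i}v^{\otimes M}$ with $b_i+c_i=a_i$ and $b_i\le(\la,\beta_i)$; the hypothesis $\sum a_i>M+\sum(\la,\beta_i)$ then forces $\sum c_i>M$, so every summand vanishes.

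For the spanning step, I first need a parabolic analog of Lemma \ref{lem:manychains} asserting that $X_{\bd,\bbm,M}$ is equivalently cut out by the chain-collection inequalities \eqref{eq:chainsineq} with the chains taken in $P_\bd$; this should be proved by replicating the greedy chain-covering construction of Lemma \ref{lem:manychains} restricted to subposets of $P_\bd$ and invoking Dilworth's theorem to match the widths $w(P'\cap P_{d_a})$ with minimal chain covers. Given this lemma, the spanning property follows as in Proposition 2.18 of \cite{F5}: a point $\bt\notin S_{\bd,\la,M}$ violates some chain-collection inequality, producing via Step 1 a straightening relation that rewrites $f^\bt\ell_{\la,M}$ as a linear combination of monomials strictly smaller in a chosen term order, and induction completes the reduction. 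Linear independence then follows by combining the parabolic Minkowski sum theorem just proved with the FFLV basis of $L_\la^a$ for $\bd$-dominant $\la$ (cf.\ \cite{FFL1,FFL2}) and the obvious monomial basis of $V_{\bd,M}$ indexed by $S_{\bd,0,M}$: since $L^a_{\bd,\la,M}$ is realized as the cyclic submodule of $L_\la^a\otimes V_{\bd,M}$ generated by $\ell_\la\otimes v^{\otimes M}$, expanding $f^\bs\ell_{\la,M}$ through the abelian coproduct and invoking the Minkowski decomposition $S_{\bd,\la,M}=S_{\bd,\la,0}+S_{\bd,0,M}$ separates the contributions among tensor-product basis vectors.

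The step I expect to be most delicate is the parabolic analog of Lemma \ref{lem:manychains}. The poset $P_\bd$ is the union of the staircases $P_{d_a}$, so the greedy chain-construction must be adapted so that its output covering simultaneously realizes $w(P'\cap P_{d_a})$ for every $a$; moreover, the identity $\sum_{a=1}^s m_{d_a}w(P'\cap P_{d_a})=\sum_a(m_{s(C_a)}+\cdots+m_{f(C_a)-1})$ has to be verified using $m_i=0$ for $i\notin\{d_1,\dots,d_s\}$, so that the telescoping sum on the right collapses correctly onto the $P_{d_a}$ strata. This is a careful combinatorial verification rather than a conceptual obstacle, and I expect it to proceed along the same lines as Lemma \ref{lem:manychains}.
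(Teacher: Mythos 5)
Your proposal mirrors the paper's own (very terse) proof: the paper simply states that the theorem "is proved along the same lines as Theorem \ref{thm:rel+bas}," which in turn verifies the relations via Lemma \ref{lem:rel}, obtains the spanning property from the straightening argument of \cite{F5} together with Lemma \ref{lem:manychains}, and deduces linear independence from Theorem \ref{thm:MS} and \cite{FFL2}. You correctly unfold exactly these three steps in their parabolic versions, correctly flag the parabolic analog of Lemma \ref{lem:manychains} as the place where new verification is required (it is also an implicit prerequisite for the parabolic Minkowski-sum theorem, which enters the linear-independence step), and your reconstruction is consistent with what the paper intends.
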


\subsection{Partial flag varieties}
Let $\cF_\bd\subset \prod_{a=1}^s \Gr(d_a,n)$ be the partial flag variety consisting of collections 
$(U_{d_a})_{a=1}^s$ such that $U_{d_a}\subset U_{d_{a+1}}$. One has
\[
\dim \cF_\bd = \sum_{i=a}^s (d_a-d_{a-1})(n-d_a)=N_\bd.
\]
The partial flag varieties are acted upon by the group $SL_n$ 
and  $\cF_\bd$ is identified with the quotient $SL_n/P_\bd$ for the parabolic subgroup $P_\bd$. 

Let $\cF_\bd^a$ be the PBW degenerate flag variety. This variety also sits
inside the product of Grassmann varieties $\Gr(d_a,n)$, $1\le a\le s$ and consists
of collections of subspaces $(U_{d_a})_{a=1}^s$ of the ambient space $L$ 
such that
\[
\pr_{d_a+1}\dots\pr_{d_{a+1}} U_{d_a}\subset U_{d_{a+1}}. 
\]   
The varieties  $\cF_\bd^a$ are flat degenerations of the classical flag varieties  $\cF_\bd$;
in particular, $\dim \cF_\bd^a =  \dim \cF_\bd$.
The variety  $\cF_\bd^a$ admits an action of the  abelian unipotent group $\bG_a^{N_\bd}=\exp(\fr^a_\bd)$. 
The group $\bG_a^{N_\bd}$ acts on $\cF_\bd^a$ with an open dense orbit 
isomorphic to the affine cell (we note that the varieties $\cF_\bd^a$ are irreducible).
However, there is a larger group acting on the degenerate flag varieties.  
Let us denote by $SL_{n,\bd}^a$ a Lie group of the Lie algebra $\msl_{n,\bd}^a$
defined as the semi-direct product of its normal abelian subgroup $\bG_a^{N_\bd}=\exp(\fr^a_\bd)$ and 
the parabolic subgroup $P_\bd$ (the defining homomorphism 
$P_\bd\to \mathrm{Aut}(\bG_a^{N_\bd})$ is defined by the action of $\fp_\bd$ on $\fr^a_\bd$).

Let us generalize the definition of $\G(n)$ to the case of arbitrary collection $\bd=(d_1,\dots,d_s)$.
Recall the number $N_\bd=\dim\Fl_\bd=\dim\fr_\bd$ and the open cell $\imath: \bA^{N_d}\subset \Fl^a_\bd$.
We define 
\[
\G(\bd,n)\subset \bP^{N_\bd}\times \Fl^a_\bd,\qquad \G(\bd,n) = \overline{\{(x,\imath(x)),\ x\in \bA^{N_d}\}}.
\]
Let $\partial\G(\bd,n)$ be the boundary of the graph closure, i.e. the complement to the
open cell $\G^o(\bd,n)=\{(x,\imath(x)),\ x\in \bA^{N_d}\}$.
One has an embedding $\partial\G(\bd,n)\subset \bP(\fr_\bd)\times 
\Fl^a_\bd$.

Let $\varphi_\bd$ be the projection $\G(\bd,n)\to\Fl_\bd^a$.
For $f\in\fr_\bd$, $d\in\bd$ recall the element $f_d\in\fr_{d}$ 
(i.e. $f_d$ is the image of $f$ under the natural projection from $\fr_\bd$
to the radical corresponding to the maximal parabolic subalgebra).
The following theorem is proved along the same lines as Theorem \ref{thm:maingeom}.

\begin{thm}
	For $U=(U_d)_{d\in\bd}\in\partial \Fl_\bd^a$ the fiber $\varphi_\bd^{-1}(U)$ consists of pairs
	$([f],U)$ such that for any $d\in \bd$ one has $\mathrm{Im}(f_d)\subset U_d$ and $\ker(f_d)\supset pr_{[d+1,n]} U_d$. 
\end{thm}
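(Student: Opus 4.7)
The plan is to mirror the two-step proof of Theorem \ref{thm:maingeom}. In the first step I will establish the fiber description at the torus fixed points $U_{\bf J}\in \Fl^a_\bd$, which are indexed by $\bd$-admissible collections ${\bf J}=(J_{d_a})_{a=1}^s$ of coordinate subsets. In the second step I will propagate the description across $\Fl^a_\bd$ using the $SL^a_{n,\bd}$-equivariance of $\varphi_\bd$, relying on the fact that the $SL^a_{n,\bd}$-orbits through the torus fixed points form an affine paving of $\Fl^a_\bd$.

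For the fixed-point step I would imitate Proposition \ref{prop:fiberJ}. The component projections $\Fl^a_\bd\to \Gr(d_a,n)$, combined with the coordinate projections $\fr_\bd\to \fr_{d_a}$, allow one to pull back the Pl\"ucker-type relations that cut out each $\G(d_a,n)$ inside $\bP(\bC v\oplus\fr_{d_a})\times \bP(\Lambda^{d_a}L)$ (see \cite{F5}). Component by component, these yield the restriction that for $([f],U_{\bf J})\in\G(\bd,n)$ and $f=\sum f_{i,j}E_{i,j}$, the entry $f_{i,j}$ vanishes unless $i\in J_{d_a}$ and $j\notin J_{d_a}$ for every $d_a$ with $j\le d_a<i$. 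The converse direction is obtained exactly as in Proposition \ref{prop:fiberJ} via the one-parameter family
\[
p(t)=\Bigl([v+tf],\,\bigl(\mathrm{span}\{\ell_a : a\in J_{d_b}\cap[d_b]\}\oplus \mathrm{span}\{\ell_a+tf_{d_b}\ell_a : a\in [d_b]\setminus J_{d_b}\}\bigr)_{b=1}^s\Bigr),
\]
which sits in $\G^o(\bd,n)$ and converges to $([f],U_{\bf J})$ on the dense locus where every block $f_{d_b}$ is an isomorphism from $\mathrm{span}\{\ell_a : a\in [d_b]\setminus J_{d_b}\}$ to $\mathrm{span}\{\ell_a : a\in J_{d_b}\cap [d_b+1,n]\}$. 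Since $\varphi_\bd^{-1}(U_{\bf J})$ is a closed subvariety of $\bP(\fr_\bd)$, this forces equality, and the resulting explicit subspace coincides with the one described by $\mathrm{Im}(f_{d_a})\subset U_{{\bf J},d_a}$ and $\ker(f_{d_a})\supset pr_{[d_a+1,n]}U_{{\bf J},d_a}$, exactly as in the complete flag case.

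For the equivariance step, decompose $g\in SL^a_{n,\bd}$ as $g=\gamma\exp(\tau)$ with $\gamma\in P_\bd$ and $\tau\in \fr^a_\bd$. The action of $\exp(\tau)$ fixes $[f]$ while changing $U_{d_a}$ only within the subspace $L_{d_a}^+$, so both $U_{d_a}\cap L_{d_a}^+$ and $pr_{[d_a+1,n]}U_{d_a}$ are unchanged, and the image/kernel conditions are preserved verbatim. For $\gamma\in P_\bd$, the key point is that $P_\bd$ is block upper triangular with respect to the partition determined by $\bd$, hence preserves each $L_{d_a}^-$ and acts on the quotient $L/L_{d_a}^-\simeq L_{d_a}^+$; the conjugation action on $f_{d_a}$ therefore factors through the actions on source and target, compatibly with both conditions.

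The main (though mild) obstacle is precisely this $P_\bd$-equivariance check: unlike the Borel $B$ used in the complete flag case, $P_\bd$ carries a nontrivial Levi part, so one must verify that $pr_{[d+1,n]}$ and the decomposition $L=L_d^-\oplus L_d^+$ remain compatible with the action. This compatibility holds exactly for $d\in\bd$, which is why the conditions in the theorem are indexed by $\bd$ rather than by all of $[n-1]$.
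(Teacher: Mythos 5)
Your proof mirrors the paper's intended argument exactly: the paper states that this theorem "is proved along the same lines as Theorem \ref{thm:maingeom}," and you carry out precisely that plan — first the torus fixed points via the $\bd$-analogue of Proposition \ref{prop:fiberJ} with the pulled-back Pl\"ucker relations and the explicit curve $p(t)$, then the propagation via $SL^a_{n,\bd}$-equivariance using the decomposition $g=\gamma\exp(\tau)$ with $\gamma\in P_\bd$, $\tau\in\fr^a_\bd$. Your closing observation — that the conditions are indexed by $\bd$ because $P_\bd$ preserves $L_d^-$ (and acts on the quotient $L/L_d^-\simeq L_d^+$) only for $d\in\bd$ — is the correct reason the Levi part causes no trouble, and matches the "glued from two pieces" argument in the complete-flag proof.
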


Corollaries \ref{cor:hcr} and \ref{cor:toricdeg} hold for arbitrary $\bd$ as well.

\begin{cor}
The homogeneous coordinate ring of $\G(\bd,n)$ is isomorphic
to the direct sum of duals of $L_{\bd,\la,M}$. The variety  $\G(\bd,n)$ admits 
flat degeneration to the toric variety whose Newton polytope is $X(\bd,\bbm,M)$
(for $M>0$ and $m_{d_a}>0$, $a=1,\dots,s$).
\end{cor}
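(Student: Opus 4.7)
The plan is to transport the arguments of Corollaries \ref{cor:hcr} and \ref{cor:toricdeg} into the parabolic setting. The two essential ingredients are already in place: the Minkowski sum identity
\[
S_{\bd,\bbm,M}+S_{\bd,\bbm',M'}=S_{\bd,\bbm+\bbm',M+M'}
\]
established in the first theorem of Section \ref{sec:partial}, together with the monomial basis $\{f^{\bs}\ell_{\la,M}\}_{\bs\in S_{\bd,\la,M}}$ of $L^a_{\bd,\la,M}$ provided by the subsequent representation-theoretic theorem.

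For the coordinate ring statement, I would first record the multi-projective embedding
\[
\G(\bd,n)\subset \bP(V_\bd)\times \prod_{a=1}^{s}\bP(\Lambda^{d_a}L)
\]
and look at the graded component of multidegree $(M,m_{d_1},\dots,m_{d_s})$ of its homogeneous coordinate ring. Restriction of sections of $\cO(M)\boxtimes \cO(m_{d_1})\boxtimes\cdots\boxtimes\cO(m_{d_s})$ to $\G(\bd,n)$ realises this graded piece as a cyclic $\msl_{n,\bd}^a$-module with cyclic vector the image of $v^{\T M}\T\ell_\la$ (where $\la=\sum m_{d_a}\om_{d_a}$), whose defining relations are exactly the ones listed in the parabolic analogue of Lemma \ref{lem:rel}. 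The monomial basis indexed by $S_{\bd,\la,M}$ then matches the upper bound on the dimension coming from the Minkowski sum property, yielding the identification with $(L^a_{\bd,\la,M})^*$.

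For the toric degeneration, I would use the PBW monomials $f^{\bs}$, $\bs\in S_{\bd,\la,M}$, to equip the total coordinate ring $\bigoplus_{\la,M}(L^a_{\bd,\la,M})^*$ with a filtration compatible with multiplication. The additivity of the sets $S_{\bd,\bbm,M}$ under Minkowski sum ensures that the associated graded algebra is the semigroup algebra of the integer cone over $X_{\bd,\bbm,M}$. A standard Rees algebra construction then exhibits $\G(\bd,n)$ as the generic fibre of a flat family over $\bA^1$ whose special fibre is $\mathrm{Proj}$ of this semigroup algebra, i.e.\ the projective toric variety attached to $X_{\bd,\bbm,M}$. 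The hypothesis $M>0$ and $m_{d_a}>0$ for all $a$ guarantees that $X_{\bd,\bbm,M}$ is full-dimensional, so that the toric degeneration has the expected dimension.

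The hard part, as in the complete flag case, is really organisational rather than conceptual: one has to verify that the embedding is $SL_{n,\bd}^a$-equivariant, that the Pl\"ucker-type relations cutting out $\G(\bd,n)$ in the multi-projective space match the defining relations of $L^a_{\bd,\la,M}$, and that the chosen filtration respects the multigrading by $(M,\bbm)$. All of these verifications are routine translations of the arguments in \cite{F5} and of Corollaries \ref{cor:hcr} and \ref{cor:toricdeg}, once the parabolic analogues of Theorems \ref{thm:MS} and \ref{thm:rel+bas} are in hand, which is exactly what Section \ref{sec:partial} provides.
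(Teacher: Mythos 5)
Your proposal matches the paper's proof exactly: the paper simply states that Corollaries \ref{cor:hcr} and \ref{cor:toricdeg} carry over to the parabolic setting once the parabolic Minkowski sum theorem and the monomial basis theorem for $L^a_{\bd,\la,M}$ are in hand, which is precisely the transport you describe. Your outline of the multi-projective embedding, the identification of graded pieces with duals of $L^a_{\bd,\la,M}$, and the Rees-algebra degeneration to the semigroup algebra of the cone over $X_{\bd,\bbm,M}$ is the same argument, spelled out in a bit more detail.
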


Finally, recall (see \cite{CFR1,CFR2}) that the partial degenerate flag varieties
also admit realizations as quiver Grassmmannians. More precisely, there exist
a projective and an injective representations $P_\bd$ and $I_\bd$ of 
the equioriented type $A_s$ quiver such that 
$\Fl^a_\bd\simeq \Gr_{\dim P_\bd} (P_\bd\oplus I_\bd)$. As in Theorem \ref{thm:quiver} we obtain that  a fiber of the projection $\varphi_\bd: \G(\bd,n) \to \Fl^a_\bd$ is the projectivization of the space $\mathrm{Hom}_{A_s}(P_\bd/N_P,N_I)$, where $N_P\oplus N_I$ represents a point in the degenerate flag variety or, equivalently, in the quiver Grassmannian.

\end{document}